\documentclass[12pt]{article}

\usepackage{amsmath, amsthm, amssymb}
\usepackage{mathptmx} 

\usepackage{graphicx}
\usepackage{float}
\usepackage{multirow}
\usepackage{tikz}

\usepackage{algorithm}
\usepackage{algpseudocode} 

\usepackage{subcaption}

\usepackage[a4paper]{geometry}
\usepackage{xcolor}
\usepackage{hyperref} 

\newtheorem{theorem}{Theorem}[section]

\newtheorem{Remark}{Remark}[section]
\newtheorem{Lemma}{Lemma}[section]
\newtheorem{Corollary}{Corollary}
\newtheorem{Definition}{Definition}

\normalsize

\newcommand{\bd}{\begin{displaymath}}
\newcommand{\ed}{\end{displaymath}}
\newcommand{\be}{\begin{equation}}
\newcommand{\ee}{\end{equation}}
\newcommand{\bea}{\begin{eqnarray}}
\newcommand{\eea}{\end{eqnarray}}
\newcommand{\bda}{\begin{eqnarray*}}
\newcommand{\eda}{\end{eqnarray*}}
\newcommand{\ba}{\begin{array}}
\newcommand{\ea}{\end{array}}

\begin{document}
	
\title{Moreau Envelope-Based Clustering for Generalized Multi-Source Weber Problem}

\author{
Nguyen Thi Thu Van\thanks{Department of Mathematics and Statistics, University of Economics Ho Chi Minh City, Ho Chi Minh City, Vietnam. Email: van.nguyen@ueh.edu.vn}}

\date{\today}

\maketitle

\begin{abstract}
In this paper, we propose an efficient algorithm for data clustering based on the
Moreau envelope, which approximates nonsmooth and nonconvex components of the generalized multi-source Weber problem. 
The number of clusters is not fixed in advance and is determined automatically by progressively removing empty or redundant clusters. 
The smoothing induced by the Moreau envelope transforms the original problem into a structured optimization task that can be efficiently solved using first-order methods and simple matrix vector operations. 
Numerical experiments on synthetic and real datasets show that the proposed approach is fast, scalable, and competitive with existing methods in both clustering quality and computational efficiency.
\end{abstract}

\noindent\textbf{Keywords:} Data clustering, Moreau envelope, nonsmooth and nonconvex optimization, DC programming, variational analysis.

\section{Introduction}

Let
$A=\{a^1,\ldots,a^m\}\subset \mathbb{R}^n$
be a finite set of demand points.
The generalized multi-source Weber problem (GMWP) concerns the location of
$k$ facilities to minimize the sum of distances from each demand point
to its nearest facility.
It can be formulated as
\begin{equation}\label{maxminn}
\min_{x=(x^1,\ldots,x^k)\in\mathbb{R}^{nk}}
f_F(x)
:=
\sum_{i=1}^{m}
\min_{\ell=1,\ldots,k}
\rho_F\bigl(x^\ell-a^i\bigr),
\end{equation}
where $\rho_F$ denotes the Minkowski functional (or gauge)
associated with a nonempty compact convex set
$F\subset\mathbb{R}^n$ satisfying $0\in\mathrm{int}(F)$, defined by
\[
\rho_F(x)
=
\inf\bigl\{t\ge 0 \mid x\in tF\bigr\},
\qquad x\in\mathbb{R}^n.
\]

Problem~\eqref{maxminn} unifies and extends several classical models in
facility location theory, including the Fermat--Torricelli problem and the
single-source Weber problem; see
\cite{MordukhovichNam11, MordukhovichNam23}.
It also serves as a flexible framework for clustering and multi-facility
location models, which have been extensively studied using tools from
variational analysis and difference-of-convex (DC) programming; see, e.g.,
\cite{CuongThienYaoYen23, CuongWenYaoYen24, CuongYaoYen20, LongNamSharkanskyYen25, NamRectorGiles17, NamTranReynolds18}.

Due to the presence of the minimum operator and the induced assignment
structure, problem~\eqref{maxminn} is intrinsically nonsmooth and nonconvex.
These features pose significant challenges for both theoretical analysis
and numerical computation, particularly in large-scale settings.

It was shown in~\cite{LongNamTranVan24} that the set of global minimizers of
problem~\eqref{maxminn} is nonempty and compact.
Moreover, the authors derived an explicit DC decomposition of the objective
function, which provides a convenient variational framework for stationarity
analysis and algorithm design.
Specifically, for any
$x=(x^1,\ldots,x^k)\in\mathbb{R}^{nk}$, one has
\[
f_F(x)=g(x)-h(x),
\]
where
\[
g(x)
=
\sum_{i=1}^m
\sum_{r=1}^k \rho_F(x^r-a^i),
\qquad
h(x)
=
\sum_{i=1}^m
\max_{\ell=1,\ldots,k}
\sum_{\substack{r=1 \\ r\neq \ell}}^k
\rho_F(x^r-a^i).
\]

This DC structure enables the use of the DC Algorithm (DCA),
introduced in~\cite{TaoAn97}, which has been widely applied to nonsmooth and
nonconvex optimization problems.
Enhanced variants, including boosted and adaptive schemes, have been proposed;
see, e.g., \cite{AragonArtachoFlemingVuong18, AragonArtachoVuong20, LongNamTranVan24}.
In particular, smoothing-based DC methods were developed in
\cite{LongNamTranVan24} to handle the nonsmooth structure of the objective
function. Recently, the case where the number of centers is unknown has been investigated 
in clustering models based on the minimum sum-of-squares criterion; see, e.g., \cite{An25}. 
In the context of multifacility location problems, related regularized formulations 
within the generalized Weber framework have been studied in 
\cite{GeremewLongNamSolanoHerrera26}. 

Despite these developments, DC-based approaches typically require solving
convex subproblems at each iteration, which can be computationally demanding
for large-scale instances. This motivates the search for alternative approaches
that avoid solving subproblems at each iteration.

From an optimization viewpoint, proximal-type techniques play a central role
in the treatment of nonsmooth problems.
Among them, the Moreau envelope provides a classical and effective smoothing
mechanism for proper lower semicontinuous functions under suitable regularity
assumptions; see, e.g.,~\cite{Moreau65, 
PoliquinRockafellar96,RockafellarWets98}.
In the convex setting, the Moreau envelope is continuously differentiable with
a Lipschitz continuous gradient and preserves minimizers.
In the nonconvex setting, local smoothness properties can still be guaranteed
under assumptions such as prox-regularity and calmness
\cite{PoliquinRockafellar96, RockafellarWets98}.

Motivated by these considerations, we propose a first-order algorithm for
computing stationary points of problem~\eqref{maxminn} based on Moreau
envelope smoothing.
By applying the Moreau envelope to the nonsmooth terms in the objective function,
we obtain a continuously differentiable approximation that enables the use of
simple gradient-based iterations.
The proposed method is easy to implement and well suited for large-scale
problems.
Moreover, starting from an initial overestimation of the number of
facilities, the algorithm is able to adaptively adjust the number of clusters
in practice during the optimization process.

The remainder of the paper is organized as follows.
Section~\ref{sec2} presents preliminary results and basic tools from
variational analysis.
Section~\ref{sec3} introduces the proposed algorithm and establishes its
convergence properties.
Numerical experiments illustrating the effectiveness of the proposed method
are reported in Section~\ref{sec4}.

\section{Preliminaries and Technical Tools}\label{sec2}

In this section, we recall several notions from variational analysis that will be used to establish local smoothness properties of nonsmooth functions arising in our model.

Throughout this paper, we work in the Euclidean space $\mathbb{R}^n$, equipped with the inner product $\langle \cdot, \cdot \rangle$ and the Euclidean norm $\|\cdot\|$. As usual, $\|\cdot\|_1$ and $\|\cdot\|_\infty$ denote the $\ell_1$-norm and the $\ell_\infty$-norm in $\mathbb{R}^n$, respectively.

Let $\varphi:\mathbb{R}^n \to (-\infty,+\infty]$ be a proper and lower semicontinuous
function. For a given parameter $\mu>0$, the \emph{Moreau envelope} of $\varphi$
is defined (see \cite{Moreau65,RockafellarWets98}) by
\[
\varphi^\mu(x)
:=
\inf_{y\in\mathbb{R}^n}
\left\{
\varphi(y)
+
\frac{1}{2\mu}\|y-x\|^2
\right\},
\qquad x\in\mathbb{R}^n.
\]
The associated \emph{proximal mapping} is given by
\[
\mathrm{prox}_{\mu\varphi}(x)
:=
\arg\min_{y\in\mathbb{R}^n}
\left\{
\varphi(y)
+
\frac{1}{2\mu}\|y-x\|^2
\right\}.
\]

It is well known that if $\varphi$ is convex, then $\varphi^\mu$ is continuously 
differentiable on $\mathbb{R}^n$, and
\[
\nabla \varphi^\mu(x)
=
\frac{1}{\mu}\bigl(x-\mathrm{prox}_{\mu\varphi}(x)\bigr),
\]
see, e.g., \cite[Theorem 2.26]{RockafellarWets98}.

For nonconvex functions, differentiability may fail in general; however,
under suitable local regularity conditions, such as local single-valuedness
and continuity of the proximal mapping, the Moreau envelope remains locally smooth.

To formulate precise regularity conditions ensuring local differentiability,
we recall the notions of prox-boundedness and prox-regularity.

\begin{Definition}[{\cite[Definition~1.23]{RockafellarWets98}}]\label{prox_bounded}
A function $\varphi:\mathbb{R}^n\to(-\infty,+\infty]$ is said to be
\emph{prox-bounded} if there exists $\mu>0$ such that
\[
\inf_{x\in\mathbb{R}^n}
\left\{
\varphi(x)+\frac{1}{2\mu}\|x\|^2
\right\}
>
-\infty.
\]
The supremum of all such $\mu$ is called the \emph{threshold of prox-boundedness}
of $\varphi$.
\end{Definition}

For any $\mu$ below this threshold, the Moreau envelope is well-defined.

\begin{Definition}[{\cite[Definition~13.27]{RockafellarWets98}}]\label{prox_regular}
Let $\varphi:\mathbb{R}^n\to(-\infty,+\infty]$ and let
$\bar x\in\mathbb{R}^n$ with $\bar v\in\partial\varphi(\bar x)$.
The function $\varphi$ is said to be \emph{prox-regular at $\bar x$ for $\bar v$}
if $\varphi$ is finite and locally lower semicontinuous at $\bar x$,
$\bar v\in\partial\varphi(\bar x)$, and there exist constants
$\varepsilon>0$ and $\rho\ge0$ such that
\[
\varphi(x')
\ge
\varphi(x) + \langle v, x'-x\rangle - \frac{\rho}{2}\|x'-x\|^2
\]
for all $x',x\in\mathbb{B}(\bar x,\varepsilon)$ and all
$v\in\partial\varphi(x)$ satisfying
$\|v-\bar v\|<\varepsilon$ and
$\varphi(x)<\varphi(\bar x)+\varepsilon$.
Moreover, if this property holds for all $\bar v\in\partial\varphi(\bar x)$,
then $\varphi$ is said to be prox-regular at $\bar x$.
\end{Definition}

The following lemma provides a local differentiability property of the Moreau envelope. This is a key technical tool in our convergence analysis. For completeness, we introduce the proof here. 

\begin{Lemma} \label{lem:local-moreau-diff}
Let $\varphi:\mathbb{R}^n \to (-\infty,+\infty]$ be proper and lower semicontinuous,
and let $\mu>0$. Suppose that the proximal mapping $\mathrm{prox}_{\mu\varphi}$
is single-valued and continuous in a neighborhood $U$ of $\bar x \in \mathbb{R}^n$.
Then the Moreau envelope $\varphi^\mu$ is continuously differentiable on $U$, and
\[
\nabla \varphi^\mu(x)
=
\frac{1}{\mu}\bigl(x-\mathrm{prox}_{\mu\varphi}(x)\bigr)
\quad \text{for all } x \in U.
\]
\end{Lemma}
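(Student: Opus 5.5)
The plan is to compare $\varphi^\mu$ at nearby points through upper and lower bounds that share the same linear term, and to show that their difference is $o(\|h\|)$. Write $p(x):=\mathrm{prox}_{\mu\varphi}(x)$ for $x\in U$; by assumption this is a single point, so the infimum defining $\varphi^\mu(x)$ is attained and finite. Shrinking $U$ if necessary, take $U$ open. Fix $x\in U$ and $h$ with $x+h\in U$.

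\textbf{Upper bound.} Use $y=p(x)$ as a feasible point in the definition of $\varphi^\mu(x+h)$:
\[
\varphi^\mu(x+h)-\varphi^\mu(x)\le \frac{1}{2\mu}\bigl(\|p(x)-x-h\|^2-\|p(x)-x\|^2\bigr)
=\frac{1}{\mu}\langle x-p(x),h\rangle+\frac{1}{2\mu}\|h\|^2 .
\]

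\textbf{Lower bound.} Use $y=p(x+h)$ as a feasible point in the definition of $\varphi^\mu(x)$:
\[
\varphi^\mu(x+h)-\varphi^\mu(x)\ge \frac{1}{2\mu}\bigl(\|p(x+h)-x-h\|^2-\|p(x+h)-x\|^2\bigr)
=\frac{1}{\mu}\langle x-p(x+h),h\rangle+\frac{1}{2\mu}\|h\|^2 .
\]

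\textbf{Combining the bounds.} Subtracting the common linear term $\frac1\mu\langle x-p(x),h\rangle$ gives
\[
\Bigl|\varphi^\mu(x+h)-\varphi^\mu(x)-\tfrac1\mu\langle x-p(x),h\rangle\Bigr|\le \tfrac1\mu\|p(x+h)-p(x)\|\,\|h\|+\tfrac{1}{2\mu}\|h\|^2 .
\]
The right-hand side is $o(\|h\|)$ because $p$ is continuous at $x$. This establishes Fréchet differentiability at every $x\in U$, with $\nabla\varphi^\mu(x)=\frac1\mu(x-p(x))$.

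\textbf{Continuity of the gradient.} This formula is continuous on $U$, since $p$ is continuous there. Hence $\varphi^\mu$ is $C^1$ on $U$.

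\textbf{Main obstacle.} The only delicate point is the finiteness of $\varphi^\mu$ near $x$. This holds because the prox is nonempty on $U$, so the infimum is attained by a point where $\varphi$ is finite; $\varphi$ being proper then makes $\varphi^\mu$ finite at those points. No prox-regularity is needed; the hypotheses are used exactly in the two feasibility substitutions and in the continuity step.
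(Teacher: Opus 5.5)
Your proof is correct and follows the paper's argument essentially step for step. It uses the same two feasibility substitutions ($p(x)$ in the definition of $\varphi^\mu(x+h)$, and $p(x+h)$ in that of $\varphi^\mu(x)$), and then continuity of the prox both to make the gap $o(\|h\|)$ and to get continuity of $\nabla\varphi^\mu$; your explicit two-sided estimate and your remark on the finiteness of $\varphi^\mu$ are slightly tidier than the paper's write-up.
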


\begin{proof}
Let $U$ be a neighborhood of $\bar x$ on which $\mathrm{prox}_{\mu\varphi}$
is single-valued and continuous, and define
\[
y(x) := \mathrm{prox}_{\mu\varphi}(x), \quad x \in U.
\]

By definition of the Moreau envelope,

\[
\varphi^\mu(x)
=
\inf_{y\in\mathbb{R}^n}
\left\{
\varphi(y)
+
\frac{1}{2\mu}\|y-x\|^2
\right\},
\]

and the minimum is attained at $y(x)$, so that
\[
\varphi^\mu(x)
=
\varphi(y(x)) + \frac{1}{2\mu}\|y(x)-x\|^2.
\]

For any $h \in \mathbb{R}^n$, we have

\begin{align}
\varphi^\mu(x+h)
&\le
\varphi(y(x)) + \frac{1}{2\mu}\|y(x)-x-h\|^2 \notag\\
&=
\varphi(y(x)) + \frac{1}{2\mu}\Big(\|y(x)-x\|^2 - 2\langle y(x)-x, h\rangle + \|h\|^2\Big) \notag\\
&=
\varphi(y(x)) + \frac{1}{2\mu}\|y(x)-x\|^2
+ \frac{1}{\mu}\langle x - y(x), h \rangle
+ \frac{1}{2\mu}\|h\|^2 \notag\\
&=
\varphi^\mu(x)
+ \frac{1}{\mu}\langle x - y(x), h \rangle
+ \frac{1}{2\mu}\|h\|^2
\label{eq1}
\end{align}

On the other hand, by definition of $\varphi^\mu(x)$,
\[
\varphi^\mu(x)
\le
\varphi(y(x+h)) + \frac{1}{2\mu}\|y(x+h)-x\|^2.
\]
Hence,
\begin{align*}
\varphi^\mu(x+h) - \varphi^\mu(x)
&\ge
\left[
\varphi(y(x+h)) + \frac{1}{2\mu}\|y(x+h)-(x+h)\|^2
\right] \\
&\quad -
\left[
\varphi(y(x+h)) + \frac{1}{2\mu}\|y(x+h)-x\|^2
\right] \\
&=
\frac{1}{2\mu}\Big(
\|y(x+h)-(x+h)\|^2 - \|y(x+h)-x\|^2
\Big)\\
&= \frac{1}{2\mu}\Big( 2\langle x - y(x+h), h\rangle + \|h\|^2 \Big),
\end{align*}
where the last equation is obtained using the following identity
\[
\|a-h\|^2 - \|a\|^2
=
-2\langle a,h\rangle + \|h\|^2,
\quad \text{with } a = y(x+h)-x.
\]
Therefore,
\begin{align}
\varphi^\mu(x+h) - \varphi^\mu(x)
&\ge
\frac{1}{\mu}\langle x - y(x+h), h \rangle
+ \frac{1}{2\mu}\|h\|^2. \label{eq2}
\end{align}
Combining \eqref{eq1} and \eqref{eq2}, we obtain
\begin{align*}
\varphi^\mu(x)
+ \left\langle \frac{1}{\mu}(x-y(x+h)), h \right\rangle
- \frac{1}{2\mu}\|h\|^2
\le \varphi^\mu(x+h) 
\le
\varphi^\mu(x)
+ \left\langle \frac{1}{\mu}(x-y(x)), h \right\rangle
+ \frac{1}{2\mu}\|h\|^2.
\end{align*}

Observe that 
\[
\langle x-y(x+h),h\rangle
=
\langle x-y(x),h\rangle
+
\langle y(x)-y(x+h),h\rangle.
\]
Since $y$ is continuous on $U$, we have $y(x+h)\to y(x)$ as $h\to 0$, hence
\[
\|y(x+h)-y(x)\| \to 0.
\]
Therefore,
\[
|\langle y(x+h)-y(x), h \rangle|
\le \|y(x+h)-y(x)\|\,\|h\|.
\]
Dividing both sides by $\|h\|$ (for $h\neq 0$), we obtain
\[
\frac{|\langle y(x+h)-y(x), h \rangle|}{\|h\|}
\le \|y(x+h)-y(x)\| \to 0,
\]
which shows that
\[
\langle y(x+h)-y(x), h \rangle = o(\|h\|).
\]
Substituting this into the lower bound and using the continuity of $y$, we obtain
\[
\varphi^\mu(x+h)
\ge
\varphi^\mu(x)
+ \frac{1}{\mu}\langle x-y(x), h \rangle
+ o(\|h\|).
\]
Together with the upper bound, this yields
\[
\varphi^\mu(x+h)
=
\varphi^\mu(x)
+ \frac{1}{\mu}\langle x-y(x), h \rangle
+ o(\|h\|).
\]
Hence, $\varphi^\mu$ is Fr\'echet differentiable at $x$, with
\[
\nabla \varphi^\mu(x)
=
\frac{1}{\mu}(x-y(x)).
\]
The continuity of $\nabla \varphi^\mu$ follows from the continuity of $y(x)$.
\end{proof}
\section{Proposed Algorithm and Convergence Analysis}\label{sec3}

In this section, we first reformulate the problem using the Minkowski functional 
and analyze its structural properties. Based on these results, we then construct 
a smoothing approach via the Moreau envelope and propose an adaptive algorithm, 
for which convergence properties are established.

\begin{Lemma}[Lemma~2.5, \cite{LongNamTranVan24}]\label{lemmarho0}
Let $F$ be a nonempty compact convex set of $\mathbb{R}^n$, $\rho_F$ be the Minkowski functional associated with $F$, and $F^\circ$ its polar set, defined by
\[
F^{\circ} = \{v \in \mathbb{R}^n \mid \langle v, x\rangle \le 1 \;\text{for all } x \in F\}.
\]
Then the following properties hold:
\begin{enumerate}
\item[{\rm (a)}] $\rho_F(\alpha x) = \alpha \rho_F(x)$ for all $\alpha \ge 0$,
$x \in \mathbb{R}^n$.

\item[{\rm (b)}] $\rho_F(x_1 + x_2)
\le \rho_F(x_1) + \rho_F(x_2)$
for all $x_1, x_2 \in \mathbb{R}^n$.

\item[{\rm (c)}] $\rho_F(x) = 0$ if and only if $x = 0$.

\item[{\rm (d)}] $\rho_F$ is Lipschitz continuous on $\mathbb{R}^n$ with Lipschitz constant $\|F^\circ\|$.

\item[{\rm (e)}] $x \in {\rm bd}(F)$ if and only if $\rho_F(x) = 1$,
where ${\rm bd}(F)$ denotes the boundary of $F$.

\item[{\rm (f)}] $\rho_{F^\circ}(v)
= \max_{u \in F} \langle v, u \rangle$
for all $v \in \mathbb{R}^n$.

\item[{\rm (g)}] 
\[
\frac{\rho_F(x)}{\|F^\circ\|}
\le \|x\|
\le \|F\| \rho_F(x),
\qquad \forall x \in \mathbb{R}^n.
\]

\item[{\rm (h)}] 
$\rho_F(x)
\le \|F\| \|F^\circ\| \rho_F(-x)$
for all $x \in \mathbb{R}^n$.
\end{enumerate}
\end{Lemma}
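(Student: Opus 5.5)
This lemma is quoted from \cite{LongNamTranVan24}, so we only outline a self-contained argument. The standing assumption $0\in\mathrm{int}(F)$ from the introduction is needed: it gives a ball $\mathbb{B}(0,r)\subset F$, hence $\|F^\circ\|\le 1/r<\infty$. Compactness of $F$ gives $\|F\|:=\max_{u\in F}\|u\|<\infty$. Since $F\subset\mathbb{B}(0,\|F\|)$ and $F^\circ\subset\mathbb{B}(0,1/r)$, we have $\|F^\circ\|\ge 1/\|F\|>0$. The proof then goes item by item, working directly from $\rho_F(x)=\inf\{t\ge0\mid x\in tF\}$.

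\emph{Items (a)--(c).} For (a), $x\in tF$ iff $\alpha x\in \alpha tF$ when $\alpha>0$, and $\rho_F(0)=0$ handles $\alpha=0$. For (b), take $x_i\in t_iF$ with $t_1,t_2>0$. Convexity gives
\[
x_1+x_2=(t_1+t_2)\Bigl(\tfrac{t_1}{t_1+t_2}u_1+\tfrac{t_2}{t_1+t_2}u_2\Bigr)\in (t_1+t_2)F,
\]
with $u_i\in F$; then take infima. For (c), if $x\ne0$ and $x\in tF$ then $\|x\|\le t\|F\|$, so $\rho_F(x)\ge\|x\|/\|F\|>0$. This inequality is also the right half of (g).

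\emph{Items (f), (g), (e).} The left half of (g) comes from $\frac{x}{\|x\|}\,r\in F$, which gives $\rho_F(x)\le \|x\|/r$. To get the sharper constant $\|F^\circ\|$, I would first prove the duality formula
\[
\rho_F(x)=\max_{v\in F^\circ}\langle v,x\rangle .
\]
The inequality ``$\ge$'' is immediate, since $x\in tF$ gives $\langle v,x\rangle\le t$. For ``$\le$'', separate $x/s\notin F$ (for $s<\rho_F(x)$) from the closed convex set $F$, and normalize the separating vector into $F^\circ$; this uses the bipolar theorem $F^{\circ\circ}=F$. The formula then gives $\rho_F(x)\le\|F^\circ\|\|x\|$. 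Item (f) is the same formula with the roles of $F$ and $F^\circ$ swapped via $F^{\circ\circ}=F$. For (e), points with $\rho_F(x)<1$ lie in $tF$ for some $t<1$ and are interior, since $tF+(1-t)\mathbb{B}(0,r)\subset F$. Points with $\rho_F(x)=1$ are limits of points outside $F$. Closedness of $F$ gives $F=\{\rho_F\le1\}$, and (e) follows.

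\emph{Items (d), (h).} For (d), use (b) twice to get $|\rho_F(x)-\rho_F(y)|\le\max\{\rho_F(x-y),\rho_F(y-x)\}$, and bound this by $\|F^\circ\|\|x-y\|$ using (g). For (h), chain the two halves of (g):
\[
\rho_F(x)\le\|F^\circ\|\|x\|=\|F^\circ\|\|-x\|\le\|F^\circ\|\|F\|\rho_F(-x).
\]

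The main obstacle is the duality formula in (f)/(g), because it requires the separation argument and the bipolar theorem. Everything else is elementary manipulation of the definition.
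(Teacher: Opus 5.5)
The paper gives no proof of this lemma. It is quoted from \cite{LongNamTranVan24}, so your outline is a self-contained substitute rather than a variant of an in-paper argument, and it is correct. You rightly flag that the hypothesis $0\in\mathrm{int}(F)$, stated only in the introduction, is indispensable: without it $\rho_F$ can equal $+\infty$ and $F^\circ$ can be unbounded, so (d) and (g) break down. Your ordering is also efficient. Item (c) and the right half of (g) come from boundedness of $F$, the left half of (g) comes from the representation $\rho_F(x)=\max_{v\in F^\circ}\langle v,x\rangle$, and then (d) and (h) are one-line consequences of (b) and (g).

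Two refinements. First, (f) does not need the bipolar theorem at all. For $t>0$, $v\in tF^\circ$ holds iff $\langle v,u\rangle\le t$ for all $u\in F$, so $\rho_{F^\circ}(v)=\max_{u\in F}\langle v,u\rangle$ follows directly from the definition of the polar and $0\in F$. Separation is genuinely needed only for the reverse identity $\rho_F=\max_{v\in F^\circ}\langle v,\cdot\rangle$, i.e.\ for the sharp constant $\|F^\circ\|$ in (g), (d) and (h). Your direct separation argument already proves that identity without citing $F^{\circ\circ}=F$, since you normalize by $\max_{u\in F}\langle w,u\rangle\ge r\|w\|>0$. Second, if you keep the route ``(f) is the same formula with $F$ and $F^\circ$ swapped,'' you must check that $F^\circ$ is compact, convex, and has $0$ in its interior. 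This follows from $\mathbb{B}(0,1/\|F\|)\subset F^\circ\subset\mathbb{B}(0,1/r)$. Note that it is the first of these inclusions, not the second, that justifies your claim $\|F^\circ\|\ge 1/\|F\|$.
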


To reformulate problem~\eqref{maxminn}, for each $i=1,\ldots,m$, we introduce
\[
\varphi_i(x)
:=
\min_{\ell=1,\ldots,k} \rho_F(x^\ell-a^i),
\qquad
x=(x^1,\ldots,x^k)\in\mathbb{R}^{nk}.
\]
With this notation, problem~\eqref{maxminn} is equivalent to
\begin{equation}\label{sum_phi}
\min_{x\in\mathbb{R}^{nk}} f_F(x)
:=
\sum_{i=1}^m \varphi_i(x).
\end{equation}
Let $S$ denote the set of all global minimizers of~\eqref{sum_phi}. Following~\cite{LongNamTranVan24}, the solution set $S$ is nonempty and compact. To analyze the local structure of the problem, in particular around potential local minimizers,
we associate with each configuration $x=(x^1,\ldots,x^k)\in\mathbb{R}^{nk}$
a family of subsets $A^1,\ldots,A^k$ of the demand set $A$, constructed sequentially as follows.

Let $A^0=\emptyset$ and define
\[
A^\ell
=
\left\{
a^i \in A \setminus \Bigl(\bigcup_{q=0}^{\ell-1} A^q\Bigr)
\;\Big|\;
\rho_F(x^\ell-a^i)
=
\min_{r=1,\ldots,k} \rho_F(x^r-a^i)
\right\},
\qquad
\ell=1,\ldots,k.
\]
The collection $\{A^1,\ldots,A^k\}$ is referred to as the
\emph{natural clustering} induced by $x$.

\begin{Definition}[\cite{LongNamTranVan24}, Definition~3.5]
A component $x^\ell$ of $x=(x^1,\ldots,x^k)\in\mathbb{R}^{nk}$ is called
\emph{attractive} with respect to $A$ if the associated set
\begin{equation}\label{attraction}
A[x^\ell]
=
\left\{
a^i\in A
\;\Big|\;
\rho_F(x^\ell-a^i)
=
\min_{r=1,\ldots,k} \rho_F(x^r-a^i)
\right\}
\end{equation}
is nonempty.
\end{Definition}

By construction, the subsets $A^\ell$ satisfy
\[
A^\ell
=
A[x^\ell]
\setminus
\Bigl(\bigcup_{q=1}^{\ell-1} A^q\Bigr),
\qquad \ell=1,\ldots,k,
\]
which ensures that each demand point is assigned to exactly one cluster.

\begin{Lemma}[\cite{LongNamTranVan24}, Lemma~3.8]\label{lem:stability}
Let $\bar{x}=(\bar{x}^1,\ldots,\bar{x}^k)\in\mathbb{R}^{nk}$.
Assume that, for every $i=1,\ldots,m$, the index set
\[
L_i(x)
:=
\left\{
\ell\in\{1,\ldots,k\}
\;\big|\;
a^i\in A[x^\ell]
\right\}
\]
is a singleton at $\bar{x}$.
Then there exists $\varepsilon>0$ such that, for any
$x=(x^1,\ldots,x^k)\in\mathbb{R}^{nk}$ satisfying
$\|x^r-\bar{x}^r\|<\varepsilon$ for all $r=1,\ldots,k$, the following properties hold:
\begin{enumerate}
\item[(i)] $L_i(x)=L_i(\bar{x})$ for all $i=1,\ldots,m$;
\item[(ii)] $A[x^r]=A[\bar{x}^r]$ for all $r=1,\ldots,k$.
\end{enumerate}
\end{Lemma}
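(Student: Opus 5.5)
The plan is a continuity-and-compactness argument, using the Lipschitz continuity of $\rho_F$ from Lemma~\ref{lemmarho0}(d).

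\textbf{Step 1: a uniform margin at $\bar x$.} For each $i$, let $L_i(\bar x)=\{\ell_i\}$. By definition of $A[\bar x^\ell]$ in \eqref{attraction}, for every $r\neq \ell_i$ we have $\rho_F(\bar x^{\ell_i}-a^i)<\rho_F(\bar x^r-a^i)$. There are finitely many pairs $(i,r)$, so the quantity
\[
\delta:=\min_{i=1,\ldots,m}\ \min_{r\neq \ell_i}\Bigl(\rho_F(\bar x^r-a^i)-\rho_F(\bar x^{\ell_i}-a^i)\Bigr)
\]
is strictly positive. If $k=1$ the statement is trivial, so assume $k\ge 2$.

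\textbf{Step 2: choice of $\varepsilon$ and proof of (i).} Set $\varepsilon:=\delta/(2\|F^\circ\|)$; here $\|F^\circ\|$ is finite and positive because $0\in\mathrm{int}(F)$ and $F$ is compact. Take $x$ with $\|x^r-\bar x^r\|<\varepsilon$ for all $r$. By Lemma~\ref{lemmarho0}(d),
\[
|\rho_F(x^r-a^i)-\rho_F(\bar x^r-a^i)|\le \|F^\circ\|\,\|x^r-\bar x^r\|<\delta/2
\]
for every $r$ and $i$. Hence for every $r\neq\ell_i$,
\[
\rho_F(x^{\ell_i}-a^i)<\rho_F(\bar x^{\ell_i}-a^i)+\delta/2\le \rho_F(\bar x^r-a^i)-\delta/2<\rho_F(x^r-a^i).
\]
So $\ell_i$ is the unique minimizer of $r\mapsto\rho_F(x^r-a^i)$, which means $L_i(x)=\{\ell_i\}=L_i(\bar x)$.

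\textbf{Step 3: proof of (ii).} Observe that $a^i\in A[x^r]$ if and only if $r\in L_i(x)$. Then (i) gives, for each $r$,
\[
A[x^r]=\{a^i\mid r\in L_i(x)\}=\{a^i\mid r\in L_i(\bar x)\}=A[\bar x^r].
\]

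The only real point of care is Step 1: extracting a strictly positive uniform gap $\delta$. This relies on the singleton hypothesis turning the non-strict minimality in \eqref{attraction} into strict inequalities, and on the finiteness of $A$ and of the index set. Once $\delta$ is in hand, the rest is routine.
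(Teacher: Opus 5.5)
Your proof is correct and complete. The paper gives no proof of this lemma; it imports it from \cite{LongNamTranVan24} (Lemma~3.8), so there is no in-paper argument to compare against.

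Your route is the natural one:
\begin{itemize}
\item The singleton hypothesis turns the non-strict minimality in \eqref{attraction} into strict inequalities. Finiteness of the pairs $(i,r)$ then gives a uniform gap $\delta>0$.
\item Perturbations of each block of size less than $\delta/(2\|F^\circ\|)$ move every value $\rho_F(x^r-a^i)$ by less than $\delta/2$, by Lemma~\ref{lemmarho0}(d). So the unique minimizer $\ell_i$ cannot change.
\item Part (ii) follows purely set-theoretically from (i), since $a^i\in A[x^r]$ if and only if $r\in L_i(x)$.
\end{itemize}

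The side details are also right: the two-sided Lipschitz bound for a possibly non-symmetric gauge, the positivity and finiteness of $\|F^\circ\|$, and the trivial case $k=1$. A small bonus of your version is the explicit radius $\varepsilon=\delta/(2\|F^\circ\|)$, which ties the stability neighborhood to the separation gap. For existence alone, continuity of $\rho_F$ would have sufficed.
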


The objective function $f_F$ is nonsmooth and nonconvex due to the presence of the minimum operator.
To overcome this difficulty, and motivated by the local structure established above,
we adopt a smoothing strategy based on the \emph{Moreau envelope} introduced in Section~\ref{sec2}.

Given $\mu>0$, the Moreau envelope of $\varphi_i$ is defined by
\[
\varphi_i^\mu(x)
:=
\inf_{y\in\mathbb{R}^{nk}}
\left\{
\varphi_i(y)
+
\frac{1}{2\mu}\|y-x\|^2
\right\}.
\]

\begin{Lemma}\label{lem:prox-regular}
Fix $i\in\{1,\ldots,m\}$ and define $\varphi:=\varphi_i$.
Let $\bar{x}\in\mathbb{R}^{nk}$.
Assume that the index set $L_i(\bar{x})$ is a singleton, i.e., the minimum is attained uniquely.
Then $\varphi$ is prox-regular at $\bar{x}$ for $0$ and prox-bounded.
Consequently, there exists $\bar\mu>0$ such that, for all
$\mu\in(0,\bar\mu)$, the proximal mapping $\mathrm{prox}_{\mu\varphi}$
is single valued and Lipschitz continuous in a neighborhood of $\bar{x}$.
\end{Lemma}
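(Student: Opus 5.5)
The plan is to show that near $\bar x$ the function $\varphi=\varphi_i$ coincides with a single convex function, and then to transfer the standard properties of convex proximal maps. Let $\bar\ell$ be the unique element of $L_i(\bar x)$ and set
\[
\psi(y):=\rho_F(y^{\bar\ell}-a^i),\qquad y=(y^1,\ldots,y^k)\in\mathbb{R}^{nk}.
\]
By Lemma~\ref{lemmarho0}(a),(b),(d), $\psi$ is convex, finite and Lipschitz with constant $L:=\|F^\circ\|$. Since $\psi$ is one of the terms in the minimum defining $\varphi$, we have $\varphi\le\psi$ everywhere.

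\textbf{Step 1: local coincidence.} At $\bar x$ the minimum is attained only at $\bar\ell$, so $\rho_F(\bar x^{\bar\ell}-a^i)<\rho_F(\bar x^r-a^i)$ for every $r\neq\bar\ell$. All these functions are continuous, so this strict gap persists on a ball $\mathbb{B}(\bar x,2\delta)$; Lemma~\ref{lem:stability}(i) gives the same conclusion. Hence $\varphi=\psi$ on $\mathbb{B}(\bar x,2\delta)$.

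\textbf{Step 2: prox-regularity and prox-boundedness.} On $\mathbb{B}(\bar x,2\delta)$ the subdifferential of $\varphi$ equals that of the convex function $\psi$. The convex subgradient inequality therefore gives the inequality of Definition~\ref{prox_regular} with $\rho=0$, for every $\bar v\in\partial\varphi(\bar x)$. This covers $\bar v=0$ whenever $0\in\partial\varphi(\bar x)$, and in any case yields prox-regularity at $\bar x$. Prox-boundedness is immediate because $\varphi\ge 0$, so the infimum in Definition~\ref{prox_bounded} is at least $0$ for every $\mu>0$; the threshold is $+\infty$.

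\textbf{Step 3: single-valuedness and Lipschitz continuity.} One could cite the general result of Rockafellar--Wets (Proposition~13.37) for prox-regular, prox-bounded functions. I prefer a direct argument that identifies the prox map outright. Fix $x\in\mathbb{B}(\bar x,\delta)$ and $\mu>0$. The function $\varphi+\frac1{2\mu}\|\cdot-x\|^2$ is continuous and coercive, so it has minimizers. For any minimizer $y$, comparing its value with the value at $y=x$ and using that $\varphi$ is $L$-Lipschitz gives
\[
\tfrac1{2\mu}\|y-x\|^2\le\varphi(x)-\varphi(y)\le L\|y-x\|,
\qquad\text{so}\qquad \|y-x\|\le 2\mu L.
\]
Choose $\bar\mu:=\delta/(2L)$. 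For $\mu<\bar\mu$, every minimizer then lies in $\mathbb{B}(\bar x,2\delta)$, where $\varphi=\psi$.

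Now compare the two problems. Because $\varphi\le\psi$ everywhere,
\[
\min\Bigl\{\varphi+\tfrac1{2\mu}\|\cdot-x\|^2\Bigr\}\le\min\Bigl\{\psi+\tfrac1{2\mu}\|\cdot-x\|^2\Bigr\}.
\]
Conversely, each minimizer $y$ of the $\varphi$-problem satisfies $\varphi(y)=\psi(y)$, so its value is at least the $\psi$-minimum. Hence the two minimal values agree, and every minimizer of the $\varphi$-problem also minimizes the $\psi$-problem. The $\psi$-problem is strongly convex, so its minimizer is unique. Therefore $\mathrm{prox}_{\mu\varphi}(x)=\mathrm{prox}_{\mu\psi}(x)$ is a singleton on $\mathbb{B}(\bar x,\delta)$. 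Since $\psi$ is convex, $\mathrm{prox}_{\mu\psi}$ is nonexpansive, and so $\mathrm{prox}_{\mu\varphi}$ is $1$-Lipschitz there.

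The main obstacle is the localization estimate in Step~3: a global minimizer of the nonconvex proximal problem must be shown to stay inside the region where $\varphi$ agrees with $\psi$. The Lipschitz bound $\|y-x\|\le 2\mu L$ settles this uniformly in $x$, which is exactly why $\mu$ must be small.
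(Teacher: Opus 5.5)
Your proof is correct. Steps 1 and 2 match the paper: local coincidence $\varphi=\psi$ near $\bar x$, convexity of $\psi=\rho_F\circ T$, and prox-boundedness from $\varphi\ge 0$. Step 3 takes a genuinely different route.

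The paper gets single-valuedness and Lipschitz continuity by citing Rockafellar--Wets 13.37. That result is stated for a function prox-regular at $\bar x$ for $\bar v=0$, and it also asserts $\mathrm{prox}_{\mu\varphi}(\bar x)=\bar x$, so it needs $0\in\partial\varphi(\bar x)$. Here $\partial\varphi(\bar x)=\partial\psi(\bar x)$, and $0\in\partial\psi(\bar x)$ holds iff $0\in\partial\rho_F(\bar x^{\bar\ell}-a^i)$. Since $\rho_F(z)>0$ for $z\neq 0$, that means $\bar x^{\bar\ell}=a^i$. So the paper's citation, as used, covers only this degenerate case, and the claim ``prox-regular at $\bar x$ for $0$'' only makes sense there. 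You flag this correctly in Step 2.

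Your localization bound $\|y-x\|\le 2\mu\|F^\circ\|$, together with the global inequality $\varphi\le\psi$, avoids the issue. It shows $\mathrm{prox}_{\mu\varphi}=\mathrm{prox}_{\mu\psi}$ on $\mathbb{B}(\bar x,\delta)$ for all $\mu<\delta/(2\|F^\circ\|)$, for every $\bar x$ with $L_i(\bar x)$ a singleton. What your route buys:
\begin{itemize}
\item an explicit threshold $\bar\mu$ and the sharp Lipschitz constant $1$;
\item no reliance on prox-regularity theory at all;
\item an explicit formula: $\psi$ depends only on block $\bar\ell$, so the prox changes only that block. This is exactly the active-block formula the paper uses in Section 4 without justification.
\end{itemize}
Your continuity argument for Step 1 also avoids a small mismatch in the paper. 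The paper invokes the stability lemma, whose hypothesis requires every $L_j(\bar x)$ to be a singleton, while this lemma fixes only one index $i$.

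The paper's route is shorter and fits a general framework, but it rests on a hypothesis it never checks. One small point to make explicit: the Lipschitz bound you use in Step 3 is for $\varphi$, not $\psi$. It holds because a pointwise minimum of $\|F^\circ\|$-Lipschitz functions is $\|F^\circ\|$-Lipschitz, and $\|y^\ell-z^\ell\|\le\|y-z\|$.
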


\begin{proof} Fix $i\in\{1,\ldots,m\}$ and assume that
\[ L_i(\bar{x})=\{\ell^\ast\}. \] 
Recall that 
\[ \varphi(x) = \min_{\ell=1,\ldots,k} \rho_F(x^\ell-a^i), \qquad x=(x^1,\ldots,x^k)\in\mathbb{R}^{nk}. \] 
By Lemma~\ref{lem:stability}, there exists a neighborhood $U$ of $\bar{x}$ such that, for every $x\in U$, the minimum defining $\varphi(x)$ is attained uniquely at the same index $\ell^\ast$. Hence, \begin{equation}\label{eq:local_representation} \varphi(x)=\rho_F(x^{\ell^\ast}-a^i), \qquad \forall x\in U. \end{equation} 

According to Lemma~\ref{lemmarho0}{\rm(a)}--{\rm(d)}, the Minkowski functional $\rho_F$ is finite-valued, lower semicontinuous, convex, and globally Lipschitz continuous on $\mathbb{R}^n$. 

Define the function $g:\mathbb{R}^{nk}\to\mathbb{R}$ by 
\[ g(x):=\rho_F(x^{\ell^\ast}-a^i). \]
Let $T:\mathbb{R}^{nk}\to\mathbb{R}^n$ be the affine mapping \[ T(x)=x^{\ell^\ast}-a^i. \] 
Since $\rho_F$ is proper, lower semicontinuous, and convex, it follows that \[ g=\rho_F\circ T \] is a proper, lower semicontinuous, and convex function on $\mathbb{R}^{nk}$. 

By \eqref{eq:local_representation}, the function $\varphi$ coincides with the convex function $g$ in a neighborhood of $\bar{x}$. Therefore, $\varphi$ is locally convex around $\bar{x}$. In particular, by \cite[Example~13.30 and Proposition~13.32]{RockafellarWets98}, $\varphi$ is prox-regular at $\bar{x}$ for $0$. 

Moreover, since $\rho_F\ge 0$, the function $\varphi$ is finite everywhere and bounded from below. Hence, $\varphi$ is prox-bounded. 

Finally, by \cite[Theorem~13.37]{RockafellarWets98}, there exists $\bar\mu>0$ such that, for all $\mu\in(0,\bar\mu)$, the proximal mapping $\mathrm{prox}_{\mu\varphi}$ is single valued and Lipschitz continuous in a neighborhood of $\bar{x}$. 
\end{proof} 

The above analysis shows that, under the uniqueness condition on the index sets,
the objective function $f_F$ admits a locally stable structure around $\bar x$,
in the sense that the active indices defining each $\varphi_i$ remain unchanged.
Consequently, in a neighborhood of $\bar x$, the nonsmooth function $f_F$
reduces to a structured finite sum with fixed active indices.

By Lemma~\ref{lem:local-moreau-diff}, for each $i=1,\ldots,m$, the Moreau envelope $\varphi_i^\mu$ is continuously differentiable in a neighborhood of $\bar x$, with gradient given by 
\[ \nabla \varphi_i^\mu(x) = \frac{1}{\mu}\bigl(x-\mathrm{prox}_{\mu\varphi_i}(x)\bigr). \]
Since $f_F^\mu=\sum_{i=1}^m \varphi_i^\mu$ is a finite sum of continuously differentiable functions, it follows that $f_F^\mu$ is continuously differentiable in a neighborhood of $\bar x$, with gradient 
\[ \nabla f_F^\mu(x) = \sum_{i=1}^m \nabla \varphi_i^\mu(x) = \frac{1}{\mu} \sum_{i=1}^m \bigl(x-\mathrm{prox}_{\mu\varphi_i}(x)\bigr). \] 
The explicit gradient formula makes it possible to apply standard gradient-based methods to the smoothed problem. 

We now propose a general Moreau-envelope-based algorithm for solving problem~\eqref{sum_phi}. The method applies a gradient step on the Moreau-smoothed objective, followed by reassignment of demand points and deletion of any empty clusters (i.e., centers that do not serve any demand point). This approach allows the number of clusters to decrease dynamically whenever a cluster becomes inactive

\begin{algorithm}[H]
\caption{General Moreau-Envelop Algorithm for GMWP}
\label{alg:adaptive-moreau-gmw}

\textbf{Input:} 
$A := \{a^1,\ldots,a^m\} \subset \mathbb{R}^n$, 
initial number of clusters $k_0$, 
smoothing parameter $\mu>0$, 
stepsize $\alpha>0$, and 
tolerance $\varepsilon>0$.

\textbf{Initialization:} 
Choose initial centers $x^{(0)}\in\mathbb{R}^{nk_0}$, 
set $k^{(0)} := k_0$, 
$t := 0$.

\begin{algorithmic}[1]
\Repeat
    \State Compute $\nabla f_{F,k_t}^{\mu}(x^{(t)})$. 
    \State Gradient update: $x^{(t+\frac12)} = x^{(t)} - \alpha \nabla f_{F,k_t}^{\mu}(x^{(t)})$.
    \State Reassign demand points.
    \State Delete empty clusters.
    \State Update the current number of clusters $k_t$ .
    \State $x^{(t+1)} \leftarrow x^{(t+\frac12)}$.
    \State $t \leftarrow t + 1$.
\Until{$\|\nabla f_{F,k_t}^{\mu}(x^{(t)})\| \le \varepsilon (\|x^{(t)}\| + 1)$.}
\end{algorithmic}
\end{algorithm}


\begin{theorem}\label{thm:descent-stabilization}
Assume that there exists an iteration index $t_0 \in \mathbb{N}$ such that the assignment structure remains unchanged for all $t \ge t_0$. If the stepsize satisfies $\alpha \in (0, 2\mu/m)$, then:
\begin{enumerate}
    \item[(i)] The sequence $\{f_{F,k_t}^{\mu}(x^{(t)})\}$ is nonincreasing;
    \item[(ii)] The number of cluster eliminations is finite;
    \item[(iii)] $\lim_{t \to \infty} \|\nabla f_{F,\bar k}^{\mu}(x^{(t)})\| = 0$, 
    where $\bar k$ is the stabilized number of clusters.
\end{enumerate}
\end{theorem}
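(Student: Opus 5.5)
The plan is to reduce everything, after $t_0$, to gradient descent with a fixed stepsize on a single smooth function whose gradient is Lipschitz with constant $m/\mu$. The condition $\alpha<2\mu/m$ is then exactly the classical descent-lemma threshold.

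\textbf{Step 1: freeze the assignment.} For $t\ge t_0$ the assignment structure is unchanged. Since deleting a center changes the assignment, no deletions occur after $t_0$, so $k_t=\bar k$ for all $t\ge t_0$. This already gives (ii): at most $t_0$ iterations occur before stabilization, and each iteration deletes at most $k_0-1$ centers, so only finitely many eliminations happen. One could also note that $k_t$ is a nonincreasing integer sequence bounded below by $1$.

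With the assignment frozen, each $\varphi_i$ coincides along the iterates with the convex function $g_i(x)=\rho_F(x^{\ell_i}-a^i)$, where $\ell_i$ is the fixed active index. This is the representation \eqref{eq:local_representation} from the proof of Lemma~\ref{lem:prox-regular}, combined with Lemma~\ref{lem:stability}. I would therefore work with the surrogate $\Phi(x):=\sum_{i=1}^m g_i^\mu(x)$. Each $g_i$ is convex, finite and Lipschitz, so by the convex Moreau theory recalled in Section~\ref{sec2} the function $g_i^\mu$ is $C^1$. Its gradient is $\frac1\mu(x-\mathrm{prox}_{\mu g_i}(x))$, and this gradient is $\frac1\mu$-Lipschitz because $I-\mathrm{prox}$ is firmly nonexpansive. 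Hence $\nabla\Phi$ is $L$-Lipschitz with $L=m/\mu$. On the iterates, $\nabla f^\mu_{F,\bar k}=\nabla\Phi$ by Lemma~\ref{lem:local-moreau-diff}, using the formula for $\nabla f_F^\mu$ derived after Lemma~\ref{lem:prox-regular}.

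\textbf{Step 2: descent.} The descent lemma gives
\[
\Phi(x^{(t+1)})\le \Phi(x^{(t)})-\alpha\Bigl(1-\frac{\alpha m}{2\mu}\Bigr)\|\nabla\Phi(x^{(t)})\|^2 .
\]
With $\alpha\in(0,2\mu/m)$ the coefficient $c:=\alpha(1-\alpha m/(2\mu))$ is positive, which gives (i) for $t\ge t_0$. For the finitely many steps before $t_0$, I would argue that deleting an empty cluster does not increase the envelope value: removing a center that serves no point leaves every $\varphi_i$, and hence every envelope, unchanged where the assignment is preserved. Monotonicity then holds along the whole sequence, at least with respect to the envelope value evaluated on the current configuration.

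\textbf{Step 3: gradient goes to zero.} Summing the descent inequality from $t_0$ gives
\[
c\sum_{t\ge t_0}\|\nabla\Phi(x^{(t)})\|^2\le \Phi(x^{(t_0)})-\inf\Phi .
\]
The right-hand side is finite because $\Phi\ge0$, since $\rho_F\ge0$. The series therefore converges, and its terms tend to zero, which gives (iii).

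\textbf{Main obstacle.} The delicate point is identifying $f^\mu_{F,\bar k}$ with $\Phi$, including its value and not only its gradient. In general the nonconvex envelope $\varphi_i^\mu$ is at most $g_i^\mu$, with equality only when the prox point stays in the region where $\ell_i$ remains active. I would first try to interpret the hypothesis ``assignment structure unchanged'' as meaning that the algorithm effectively minimizes the fixed-assignment surrogate $\Phi$, as is done in the implementation. This makes (i)--(iii) statements about $\Phi$. Failing that, I would restrict to small $\mu$ via Lemma~\ref{lem:prox-regular}, so that the prox points stay in the stability neighborhood of Lemma~\ref{lem:stability}, which forces equality.
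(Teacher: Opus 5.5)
Your treatment of (ii) and of the tail $t\ge t_0$ follows the paper's route: $k_t$ is a monotone integer sequence, and on the tail you apply the descent lemma with $L=m/\mu$ and then sum. Working with the convex fixed-assignment surrogate $\Phi$ is more careful than the paper, which simply asserts that $f^{\mu}_{F,k_t}$ is $C^1$ with $m/\mu$-Lipschitz gradient between eliminations.

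The gap is in (i), which claims monotonicity along the \emph{whole} sequence. For $t<t_0$ you only treat deletions. The gradient steps across which the assignment changes are never shown to decrease $f^{\mu}_{F,k_t}$, and there the surrogate changes from step to step, so a descent inequality for one fixed $\Phi$ says nothing. The paper covers these steps by applying the descent lemma to $f^{\mu}_{F,k_t}$ itself. Having rightly declined that assumption, you need a substitute, and the missing idea is a majorization argument.

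Since the envelope of a minimum is the minimum of the envelopes, $\varphi_i^\mu(x)=\min_{\ell}\rho_F^\mu(x^\ell-a^i)$. Hence the surrogate $\Phi_t$ built from the assignment at $x^{(t)}$ satisfies $f^{\mu}_{F,k_t}\le\Phi_t$ everywhere. Equality holds at $x^{(t)}$ under the following condition (C): each $a^i$ is assigned to an index minimizing $\ell\mapsto\rho_F^\mu(x^\ell-a^i)$. Under (C), deleting empty centers leaves the value unchanged, and for every $t$
\begin{align*}
f^{\mu}_{F,k_{t+1}}(x^{(t+1)})
&=f^{\mu}_{F,k_t}(x^{(t+\frac12)})\le\Phi_t(x^{(t+\frac12)})
\le\Phi_t(x^{(t)})-c\,\|\nabla\Phi_t(x^{(t)})\|^2\\
&=f^{\mu}_{F,k_t}(x^{(t)})-c\,\|\nabla\Phi_t(x^{(t)})\|^2 .
\end{align*}
Summing gives $\nabla\Phi_t(x^{(t)})\to 0$, which is (iii) for the gradient the algorithm actually computes. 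Moreover, wherever $f^{\mu}_{F,\bar k}$ is differentiable at $x^{(t)}$, touching from above forces $\nabla f^{\mu}_{F,\bar k}(x^{(t)})=\nabla\Phi_t(x^{(t)})$. This disposes of your ``main obstacle'' without needing $f^{\mu}_{F,\bar k}=\Phi$ on a whole region.

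The same formula shows that your deletion argument (``$\varphi_i$ unchanged, hence every envelope unchanged'') is not valid as stated. The envelope value at $x$ is not a function of the numbers $\rho_F(x^\ell-a^i)$ alone. Condition (C) is automatic for the Euclidean gauge, where the envelope of $\|\cdot\|$ is the Huber function, increasing in $\|z\|$. It fails for $\ell_1$. Take $\rho_F=\|\cdot\|_1$ on $\mathbb{R}^2$, $\mu=1$, $m=1$, $a^1=0$, $x^{(0)}=((1,1),(1.9,0))$ and $\alpha=0.1$. The first center is $\rho_F$-empty ($2>1.9$) yet has the smaller envelope value ($1<1.4$), so $f^{\mu}_{F,2}(x^{(0)})=1$. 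One gradient step moves only the second center, to $(1.8,0)$, and the first is then deleted, giving $f^{\mu}_{F,1}(x^{(1)})=\rho_F^\mu(1.8,0)=1.3$; the hypothesis holds with $t_0=1$.

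So with $\rho_F$-based reassignment, (i) needs an extra hypothesis such as (C). The paper's remark that eliminations ``do not increase the objective'' has the same defect. Your fallback of shrinking $\mu$ via Lemma~\ref{lem:prox-regular} does not help either. The value of $\mu$ is fixed in the statement, and the threshold $\bar\mu$ there is pointwise, with no uniform separation margin along the iterates.
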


\begin{proof}
(i) Between two consecutive elimination events, the number of clusters is fixed, say $k_t$. On such intervals, the function $f_{F,k_t}^{\mu}$ is continuously differentiable and its gradient is Lipschitz continuous with constant
\[
L = \frac{m}{\mu}.
\] 
Applying the standard descent lemma to the gradient update
\[
x^{(t+1)} = x^{(t)} - \alpha \nabla f_{F,k_t}^{\mu}(x^{(t)}),
\] 
we obtain
\[
f_{F,k_t}^{\mu}(x^{(t+1)}) 
\le f_{F,k_t}^{\mu}(x^{(t)}) 
- \alpha \Bigl(1 - \frac{\alpha L}{2}\Bigr) \|\nabla f_{F,k_t}^{\mu}(x^{(t)})\|^2.
\]
Since $\alpha \in (0, 2/L)$, the right-hand side is nonincreasing. Elimination steps remove only empty clusters and do not increase the objective, so the overall sequence $\{f_{F,k_t}^{\mu}(x^{(t)})\}$ is nonincreasing.

(ii) The integer sequence $\{k_t\}$ is bounded below by 1 and strictly decreases whenever a cluster is eliminated. Therefore, only finitely many elimination events can occur. Consequently, there exists $\bar k \in \mathbb{N}$ and an iteration index $t_1$ such that $k_t = \bar k$ for all $t \ge t_1$.

(iii) By assumption, the assignment structure is fixed for all $t \ge t_0$, and by (ii) the number of clusters stabilizes at $\bar k$. For all sufficiently large $t$, the algorithm reduces to gradient descent applied to the smooth function $f_{F,\bar k}^{\mu}$ with stepsize $\alpha \in (0,2/L)$. Standard convergence results for gradient descent yield
\[
\lim_{t \to \infty} \|\nabla f_{F,\bar k}^{\mu}(x^{(t)})\| = 0.
\]
\end{proof}

\begin{Corollary}\label{cor:stationary-point}
Under the assumptions of Theorem~\ref{thm:descent-stabilization}, every accumulation point $\bar x$ of the sequence $\{x^{(t)}\}$ is a stationary point of the smoothed problem with $\bar k$ clusters, that is,
\[
\nabla f_{F,\bar k}^{\mu}(\bar x) = 0.
\]
\end{Corollary}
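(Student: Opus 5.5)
The plan is to combine the limiting gradient statement of Theorem~\ref{thm:descent-stabilization}(iii) with the continuity of $\nabla f_{F,\bar k}^{\mu}$ near $\bar x$. Let $\bar x$ be an accumulation point of $\{x^{(t)}\}$, and choose a subsequence $x^{(t_j)}\to\bar x$ with $t_j\to\infty$.

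\emph{Step 1: the iterates eventually live in a fixed space.} By Theorem~\ref{thm:descent-stabilization}(ii) there is $t_1$ with $k_t=\bar k$ for all $t\ge t_1$. We may take $t_j\ge\max\{t_0,t_1\}$. Then every $x^{(t_j)}$ lies in $\mathbb{R}^{n\bar k}$, the limit satisfies $\bar x\in\mathbb{R}^{n\bar k}$, and the assignment structure (the index sets $L_i$) is frozen along the tail.

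\emph{Step 2: local smoothness at $\bar x$.} We want the index sets $L_i(\bar x)$ to be singletons, so that the following chain applies:
\begin{enumerate}
\item[(a)] Lemma~\ref{lem:stability} keeps the active indices fixed on a neighborhood $U$ of $\bar x$;
\item[(b)] Lemma~\ref{lem:prox-regular} makes each $\mathrm{prox}_{\mu\varphi_i}$ single-valued and Lipschitz on $U$, for $\mu$ small;
\item[(c)] Lemma~\ref{lem:local-moreau-diff} then makes each $\varphi_i^\mu$, and hence $f_{F,\bar k}^{\mu}=\sum_i\varphi_i^\mu$, continuously differentiable on $U$.
\end{enumerate}
The singleton condition is not automatic. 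Along the tail each $a^i$ is served by one fixed index $\ell_i$, but strict inequalities can become equalities in the limit. I would handle this by reading the "unchanged assignment structure" hypothesis as including uniqueness of the active index at the limit, which is the setting in which the theorem's proof treats $f_{F,\bar k}^{\mu}$ as a single smooth function with $L=m/\mu$. Under that reading, (a)–(c) go through.

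\emph{Step 3: pass to the limit.} By Theorem~\ref{thm:descent-stabilization}(iii), $\|\nabla f_{F,\bar k}^{\mu}(x^{(t_j)})\|\to0$. By continuity of the gradient on $U$,
\[
\nabla f_{F,\bar k}^{\mu}(\bar x)=\lim_{j\to\infty}\nabla f_{F,\bar k}^{\mu}(x^{(t_j)})=0 .
\]

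The main obstacle is Step 2, namely guaranteeing differentiability of the envelope exactly at $\bar x$ rather than only along the iterates. If the hypothesis cannot be read as including uniqueness at the limit, I would use a fallback. On the tail, $f_{F,\bar k}^{\mu}$ coincides with the convex, globally $C^{1,1}$ function
\[
\sum_i \bigl(\rho_F(\cdot^{\,\ell_i}-a^i)\bigr)^{\mu},
\]
that is, the sum of Moreau envelopes of the fixed active pieces. One would apply the same continuity argument to this function, whose gradient is globally $(m/\mu)$-Lipschitz. This gives stationarity of the smoothed model with the stabilized assignment.
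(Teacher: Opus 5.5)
Your proposal is essentially the paper's proof, which consists only of your Step~3: Theorem~\ref{thm:descent-stabilization}(iii) combined with continuity of $\nabla f_{F,\bar k}^{\mu}$ at $\bar x$. The paper takes that continuity for granted (its theorem treats $f_{F,\bar k}^{\mu}$ as $C^1$ with $(m/\mu)$-Lipschitz gradient once the assignment is frozen), so your Step~2 makes explicit what the paper uses tacitly---uniqueness of the active index at $\bar x$ and, if you go through Lemma~\ref{lem:prox-regular}, $\mu$ below a threshold that depends on $\bar x$---and your caution is warranted, since a tie at $\bar x$ can make $f_{F,\bar k}^{\mu}$ nondifferentiable there.
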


\begin{proof}
Let $\bar x$ be any accumulation point of $\{x^{(t)}\}$. 
By Theorem~\ref{thm:descent-stabilization}(iii), $\|\nabla f_{F,\bar k}^{\mu}(x^{(t)})\| \to 0$, 
and by continuity of $\nabla f_{F,\bar k}^{\mu}$, we have 
\[
\nabla f_{F,\bar k}^{\mu}(\bar x) = 0.
\]
\end{proof}

\section{Numerical Results}
\label{sec4}

This section reports numerical experiments evaluating the performance of
the proposed Moreau-envelope algorithm for the generalized multi-source Weber problem.
The experiments are designed to address two main questions:
(i) how effectively the Moreau envelope-based smoothing strategy handles the
nonsmooth and nonconvex structure of the objective, and
(ii) whether the proposed adaptive mechanism can automatically reduce the
number of clusters while maintaining competitive objective values.

We compare the proposed method with the standard DCA and the adaptive BDCA
introduced in~\cite{LongNamTranVan24} under three norms: $\ell_1$, $\ell_2$, and $\ell_\infty$.

To facilitate efficient implementation, we exploit the structure of the component functions:
\[
\varphi_i(x) = \min_{\ell=1,\dots,k} \rho_F(x^\ell-a^i),
\]
where $\ell_i^\ast \in \arg\min_{\ell=1,\dots,k} \rho_F(x^\ell-a^i)$ denotes the closest center. 
Under the stability condition of Lemma~\ref{lem:stability}, this index is locally unique, so the proximal mapping $\mathrm{prox}_{\mu\varphi_i}(x)$ modifies only the active block $x^{\ell_i^\ast}$. 
Consequently, the gradient of the Moreau envelope is given by
\[
\nabla \varphi_i^\mu(x)
=
\frac{1}{\mu} \bigl(x-\mathrm{prox}_{\mu\varphi_i}(x)\bigr),
\]
with nonzero entries only in the active block, which significantly reduces computational cost.

For each norm $\rho_F$, the active-block proximal mapping admits the following forms:

\begin{itemize}
    \item $\ell_1$: $\mathrm{prox}_{\mu\|\cdot-a^i\|_1}(x^{\ell_i^\ast}) = a^i + \mathcal{S}_\mu(x^{\ell_i^\ast}-a^i)$, 
    where $\mathcal{S}_\mu$ is the componentwise soft-thresholding operator.
    
    \item $\ell_2$: $\mathrm{prox}_{\mu\|\cdot-a^i\|_2}(x^{\ell_i^\ast}) = a^i + \left(1-\frac{\mu}{\|x^{\ell_i^\ast}-a^i\|_2}\right)_+(x^{\ell_i^\ast}-a^i)$, the classical Euclidean shrinkage operator.
    
    \item $\ell_\infty$: $\mathrm{prox}_{\mu\|\cdot-a^i\|_\infty}(x^{\ell_i^\ast}) = a^i + \Pi_{\|\cdot\|_1\le\mu}(x^{\ell_i^\ast}-a^i)$, the projection onto the $\ell_1$ ball of radius $\mu$.
\end{itemize}

Two experimental settings are considered:

\begin{itemize}
    \item Fixed number of centers: The initial number of centers $k$ is specified. 
    Centers that do not serve any demand point are removed, allowing the number of centers to decrease over iterations.

    \item Adaptive number of centers: The algorithm includes a cluster-merging mechanism based on the objective function. 
    The number of centers may decrease due to either empty clusters or merging operations.
\end{itemize}

In all cases, the Moreau envelope $f_F^\mu$ is continuously differentiable with Lipschitz continuous gradient of constant $L=m/\mu$. 
This allows the same stepsize strategy and stopping criterion to be used across all norms. 
All algorithms are run until either the relative gradient norm satisfies
\[
\|\nabla f(x)\| \le \epsilon (\|x\|+1), \qquad \epsilon = 10^{-6},
\]
or a maximum of $10^4$ iterations is reached.

All experiments are implemented in \textsc{Matlab}~R2025b (Windows~11, 16-core AMD Ryzen processor, 16~GB of RAM). To ensure reproducibility, the random seed is fixed to \texttt{2026} for all experiments. Two types of datasets are considered:
\begin{itemize}
    \item Synthetic dataset: Generated in $\mathbb{R}^2$ as a mixture of $k_{\mathrm{true}} = 6$ Gaussian clusters, each containing $50$ points. Gaussian noise with zero mean and covariance $0.3^2 I_2$ is added, resulting in a total of $m = 300$ points.
    
\item Real datasets: Three benchmark datasets are used, namely 
Iris $(m = 150, d = 4, k_{\mathrm{true}} = 3)$, 
Wine $(m = 178, d = 13, k_{\mathrm{true}} = 3)$, 
and Glass $(m = 214, d = 9, k_{\mathrm{true}} = 6)$,
where $m$, $d$, and $k_{\mathrm{true}}$ denote the number of data points, 
the number of features, and the true number of classes, respectively.
\end{itemize}

For each example, the algorithms are run 50 times with different initial points. 
The reported results include the best solution over all runs for visualization, 
while averages over runs are used for timing and other metrics.
This setup ensures fair comparisons between the Moreau, DCA, and adaptive BDCA methods across different norms and initializations.

In the implementation, we employ a continuation strategy on the smoothing parameter $\mu$. 
Smaller values of $\mu$ yield a closer approximation to the original nonsmooth problem. 
The algorithm is applied over the decreasing sequence
\[
\mu \in \{1,\,0.5,\,0.2,\,0.1,\,0.01\},
\]
where the solution at each stage serves as the initialization for the next. 
The stepsize at each stage is chosen proportionally to $\mu$:
\[
\alpha = \frac{1.8\,\mu}{m}.
\]
The maximum number of iterations for all methods is set to $10{,}000$. 
For the Moreau-envelope algorithm, the total number of iterations is the sum over all stages. 
For DCA and BDCA, the total number of iterations is calculated by summing all inner iterations. The parameters for the standard DCA and adaptive BDCA are taken from~\cite{LongNamTranVan24}. 
Specifically, we use $\alpha = 0.05$, $\beta = 0.01$, $\gamma = 2$, and $\mu = 1$. 

\subsection{Fixed Number of Centers} \label{fixed-synthetic-k}

In this setting, the number of initial centers $k$ is specified and remains fixed throughout the optimization, with any center not assigned to a demand point being eliminated. The fixed-$k$ implementation of the proposed method is summarized in Algorithm~\ref{alg:fixed-moreau-gmw-1}.

We evaluate the algorithm for two fixed-$k$ settings, $k_{\mathrm{init}} = 10$ and $k_{\mathrm{init}} = 20$, performing 50 runs with randomly initialized centers for each setting. For each method, the best solution over all runs is selected based on the objective value.

For these best solutions, we report the clustering accuracy (ACC), objective value, computational time, and the number of clusters obtained after eliminating empty centers. Computational time is measured in seconds. ACC measures the percentage of data points correctly assigned to their corresponding ground-truth clusters, providing an external evaluation of clustering quality.

\begin{algorithm}[H]
\caption{Moreau-Envelop Algorithm for GMWP with Fixed Number of Centers}
\label{alg:fixed-moreau-gmw-1}

\textbf{Input:}
Demand points $A := \{a^1,\ldots,a^m\} \subset \mathbb{R}^n$,
fixed number of centers $k$,
smoothing parameter $\mu > 0$,
stepsize $\alpha > 0$, and tolerance $\varepsilon > 0$.

\textbf{Initialization:}
Choose an initial point $x^{(0)} \in \mathbb{R}^{nk}$, 
set $k_0 := k$, 
$t := 0$.

\begin{algorithmic}[1]
\Repeat
    \State Compute the gradient of the Moreau envelope:
    \[
    \nabla f_{F,k_t}^{\mu}(x^{(t)})
    =
    \frac{1}{\mu} \sum_{i=1}^m
    \Bigl(x^{(t)} - \mathrm{prox}_{\mu \varphi_i}(x^{(t)}) \Bigr),
    \]
    where $\mathrm{prox}_{\mu \varphi_i}(x^{(t)})$ modifies only the block
    corresponding to the center nearest to $a^i$.

    \State Gradient update:
    \[
    x^{(t+\frac12)} = x^{(t)} - \alpha \nabla f_{F,k_t}^{\mu}(x^{(t)}).
    \]

    \State Reassign demand points to the nearest centers.

    \State Delete empty clusters if any exist.

    \State Update the current number of centers $k_t$.

    \State $x^{(t+1)} \leftarrow x^{(t+\frac12)}$.

    \State $t \leftarrow t + 1$.
\Until{
\[
\|\nabla f_{F,k_t}^{\mu}(x^{(t)})\| \le \varepsilon (\|x^{(t)}\| + 1).
\]
}
\end{algorithmic}
\end{algorithm}

The numerical results are summarized in Tables~\ref{tab:fixed_k_10} and \ref{tab:fixed_k_20} for $k_{\mathrm{init}} = 10$ and $k_{\mathrm{init}} = 20$, respectively. Corresponding visualizations of the best solution over 50 runs on the synthetic dataset are shown in Figures~\ref{fig:fixed_k_10} and \ref{fig:fixed_k_20}.

\begin{table}[H]
\centering
\setlength{\tabcolsep}{3pt} 

\renewcommand{\arraystretch}{1}
\begin{tabular}{c|l|cccc|cccc|cccc}
\hline
& & \multicolumn{12}{c}{\textbf{Norm}} \\
\textbf{Dataset} & \textbf{Method}
& \multicolumn{4}{c}{$\ell_1$}
& \multicolumn{4}{c}{$\ell_2$}
& \multicolumn{4}{c}{$\ell_\infty$} \\
\cline{3-14}
& & ACC & Obj & Time & $k$ 
& ACC & Obj & Time & $k$ 
& ACC & Obj & Time & $k$ \\
\hline

\multirow{3}{*}{Synthetic}
& Moreau
& \textbf{0.9800} & \textbf{120} & \textbf{0.01} & 10
& \textbf{0.9700} & 120 & \textbf{0.01} & 10
& \textbf{0.9767} & \textbf{144} & \textbf{21} & 10 \\

& DCA
& 0.8733 & 183 & 20 & 7
& 0.9667 & 104 & 29 & 10
& 0.5033 & 205 & 103 & 4 \\

& aBDCA
& 0.8733 & 184 & 4 & 7
& 0.9667 & 104 & 4 & 10
& 0.5033 & 205 & 117 & 4 \\

\hline

\multirow{3}{*}{Iris}
& Moreau
& \textbf{0.9600} & \textbf{126} & \textbf{0.01} & 10
& 0.9467 & \textbf{77} & \textbf{0.01} & 10
& \textbf{0.9133} & \textbf{87} & \textbf{10} & 8 \\

& DCA
& 0.8733 & 203 & 3 & 5
& \textbf{0.9533} & 78 & 2 & 10
& 0.8267 & 102 & 30 & 4 \\

& aBDCA
& 0.8733 & 203 & 1 & 7
& 0.9467 & 78 & 2 & 10
& 0.8267 & 102 & 38 & 4 \\
\hline

\multirow{3}{*}{Wine}
& Moreau
& 0.9551 & \textbf{1064} & \textbf{0.01} & 10
& \textbf{0.9775} & 396 & \textbf{0.01} & 10 
& \textbf{0.7472} & 293 & \textbf{10} & 10 \\

& DCA
& 0.9157 & 1296 & 5 & 6
& 0.9607 & 372 & 6 & 10 
& 0.6292 & \textbf{291} & 36 & 2 \\

& aBDCA
& \textbf{0.9719} & 1300 & 2 & 6
& 0.9607 & 372 & 9 & 10 
& 0.6292 & \textbf{291} & 49 & 2 \\
\hline

\multirow{3}{*}{Glass}
& Moreau
& \textbf{0.5701} & \textbf{636} & \textbf{0.02} & 10
& \textbf{0.6308} & 294 & \textbf{0.01} & 10
& \textbf{0.5234} & \textbf{281} & \textbf{12} & 10 \\

& DCA
& 0.5327 & 995 & 5 & 9
& 0.5374 & 274 & 8 & 10
& 0.4252 & 307 & 38 & 2 \\

& aBDCA
& 0.5000 & 1011 & 1 & 9
& 0.5374 & 274 & 8 & 10
& 0.4252 & 307 & 48 & 2 \\
\hline
\end{tabular}
\caption{Comparison of methods for the fixed-center setting with $k_{\mathrm{init}} = 10$.}
\label{tab:fixed_k_10}
\end{table}

\begin{table}[H]
\centering
\setlength{\tabcolsep}{3pt} 
\renewcommand{\arraystretch}{1}
\begin{tabular}{c|l|cccc|cccc|cccc}
\hline
& & \multicolumn{12}{c}{\textbf{Norm}} \\
\textbf{Dataset} & \textbf{Method}
& \multicolumn{4}{c}{$\ell_1$}
& \multicolumn{4}{c}{$\ell_2$}
& \multicolumn{4}{c}{$\ell_\infty$} \\
\cline{3-14}
& & ACC & Obj & Time & $k$ 
& ACC & Obj & Time & $k$ 
& ACC & Obj & Time & $k$ \\
\hline

\multirow{3}{*}{Synthetic}
& Moreau
& \textbf{0.9667} & \textbf{86} & \textbf{0.01} & 20
& \textbf{0.9633} & \textbf{89} & \textbf{0.01} & 20
& \textbf{0.9533} & \textbf{112} & \textbf{21} & 20 \\

& DCA
& 0.8633 & 181 & 20 & 9
& 0.9600 & 103 & 29 & 16
& 0.6567 & 103 & 212 & 4 \\

& aBDCA
& 0.8667 & 181 & 4 & 10
& 0.9533 & 104 & 4 & 15
& 0.6567 & 212 & 117 & 4 \\

\hline

\multirow{3}{*}{Iris}
& Moreau
& \textbf{0.9600} & \textbf{98} & \textbf{0.01} & 20
& 0.9400 & 63 & \textbf{0.01} & 20
& \textbf{0.9067} & \textbf{74} & \textbf{10} & 19 \\

& DCA
& 0.8600 & 200 & 6 & 9
& 0.9400 & 63 & 9 & 20
& 0.8267 & 102 & 52 & 4 \\

& aBDCA
& 0.8400 & 207 & 2 & 6
& \textbf{0.9667} & \textbf{62} & 6 & 20
& 0.8267 & 102 & 63 & 4 \\
\hline

\multirow{3}{*}{Wine}
& Moreau
& \textbf{0.9775} & \textbf{932} & \textbf{0.01} & 20
& \textbf{0.9607} & 357 & \textbf{0.01} & 20
& \textbf{0.8652} & \textbf{257} & \textbf{11} & 20 \\

& DCA
& 0.9663 & 1295 & 8 & 10
& 0.9270 & 318 & 13 & 20
& 0.6292 & 291 & 67 & 2 \\

& aBDCA
& 0.9719 & 1297 & 4 & 6
& 0.9270 & 318 & 21 & 20
& 0.6292 & 291 & 90 & 2 \\
\hline

\multirow{3}{*}{Glass}
& Moreau
& \textbf{0.6963} & \textbf{525} & \textbf{0.02} & 20
& 0.6682 & 246 & \textbf{0.02} & 20
& \textbf{0.6028} & \textbf{240} & \textbf{15} & 18 \\

& DCA
& 0.5140 & 989 & 11 & 10
& \textbf{0.6916} & \textbf{216} & 20 & 20
& 0.4252 & 307 & 110 & 2 \\

& aBDCA
& 0.5000 & 1002 & 3 & 11
& \textbf{0.6916} & \textbf{216} & 16 & 20
& 0.4252 & 307 & 101 & 2 \\
\hline

\end{tabular}
\caption{Comparison of methods for the fixed-center setting with $k_{\mathrm{init}} = 20$.}
\label{tab:fixed_k_20}
\end{table}

\begin{figure}[H]
\centering
\includegraphics[width=0.32\textwidth]{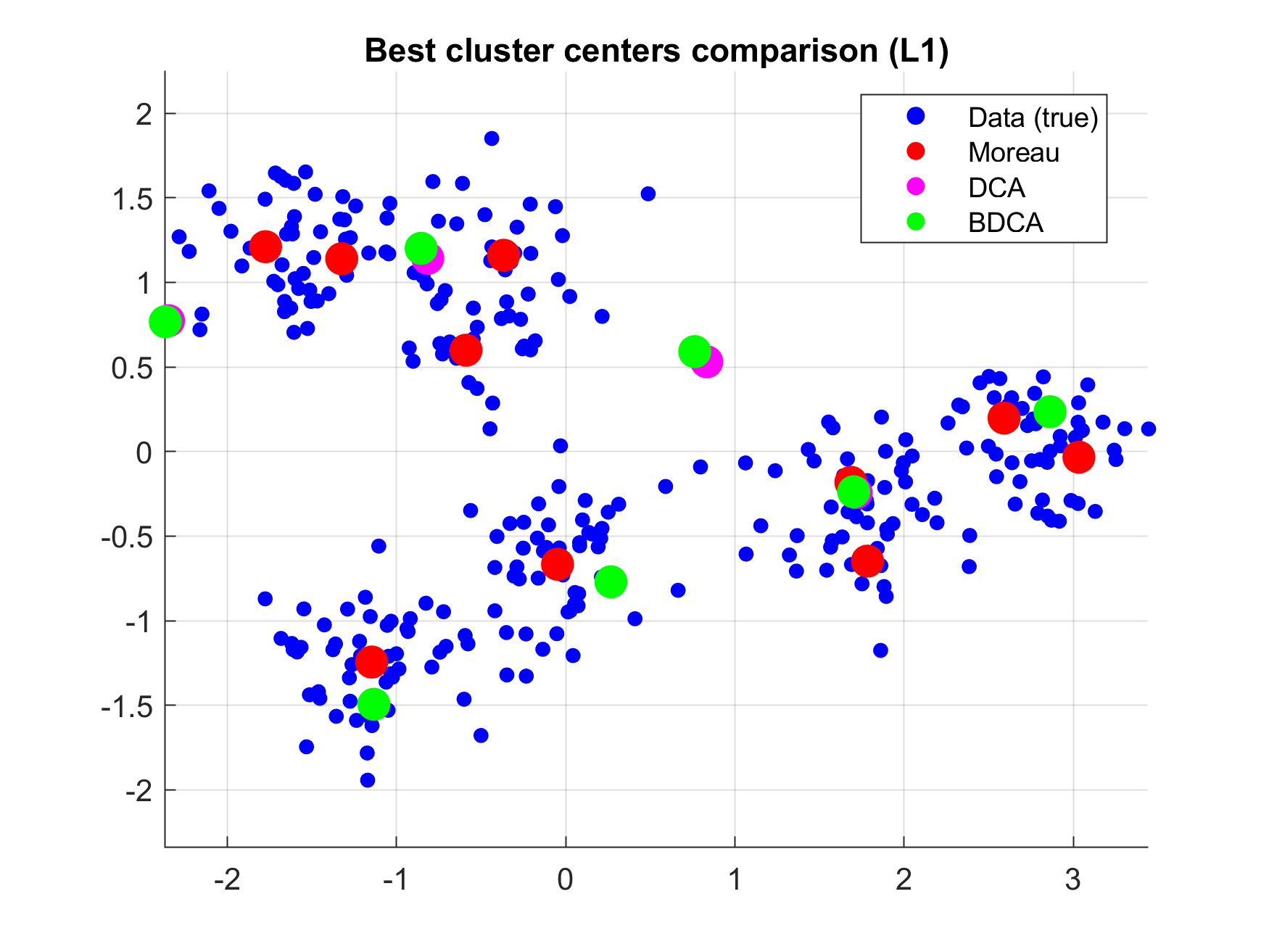}
\includegraphics[width=0.32\textwidth]{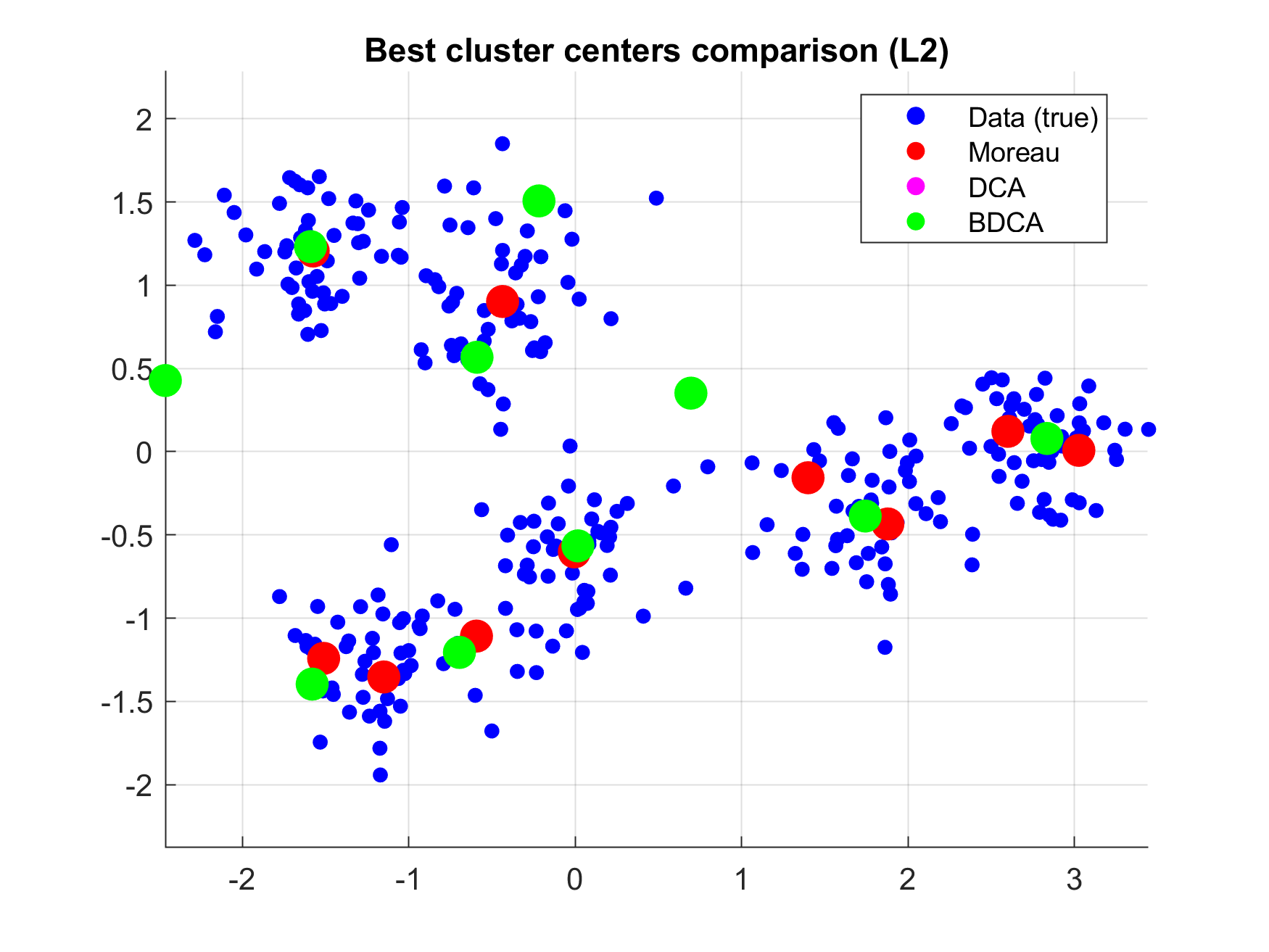}
\includegraphics[width=0.32\textwidth]{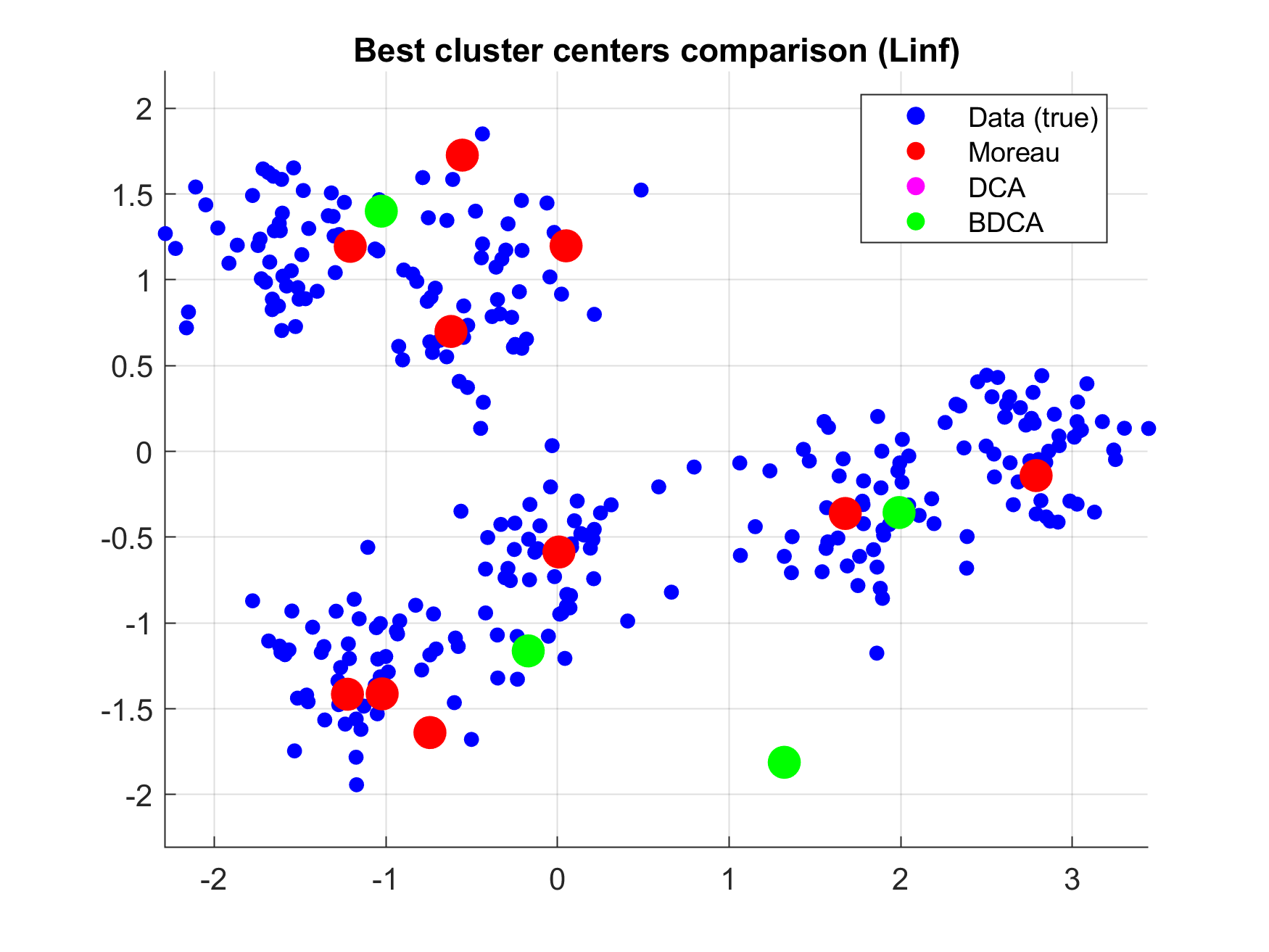}
\caption{Visualization of the best solution over 50 runs for the synthetic dataset under the fixed-center setting with $k_{\mathrm{init}} = 10$.}
\label{fig:fixed_k_10}
\end{figure}

\begin{figure}[H]
\centering
\includegraphics[width=0.32\textwidth]{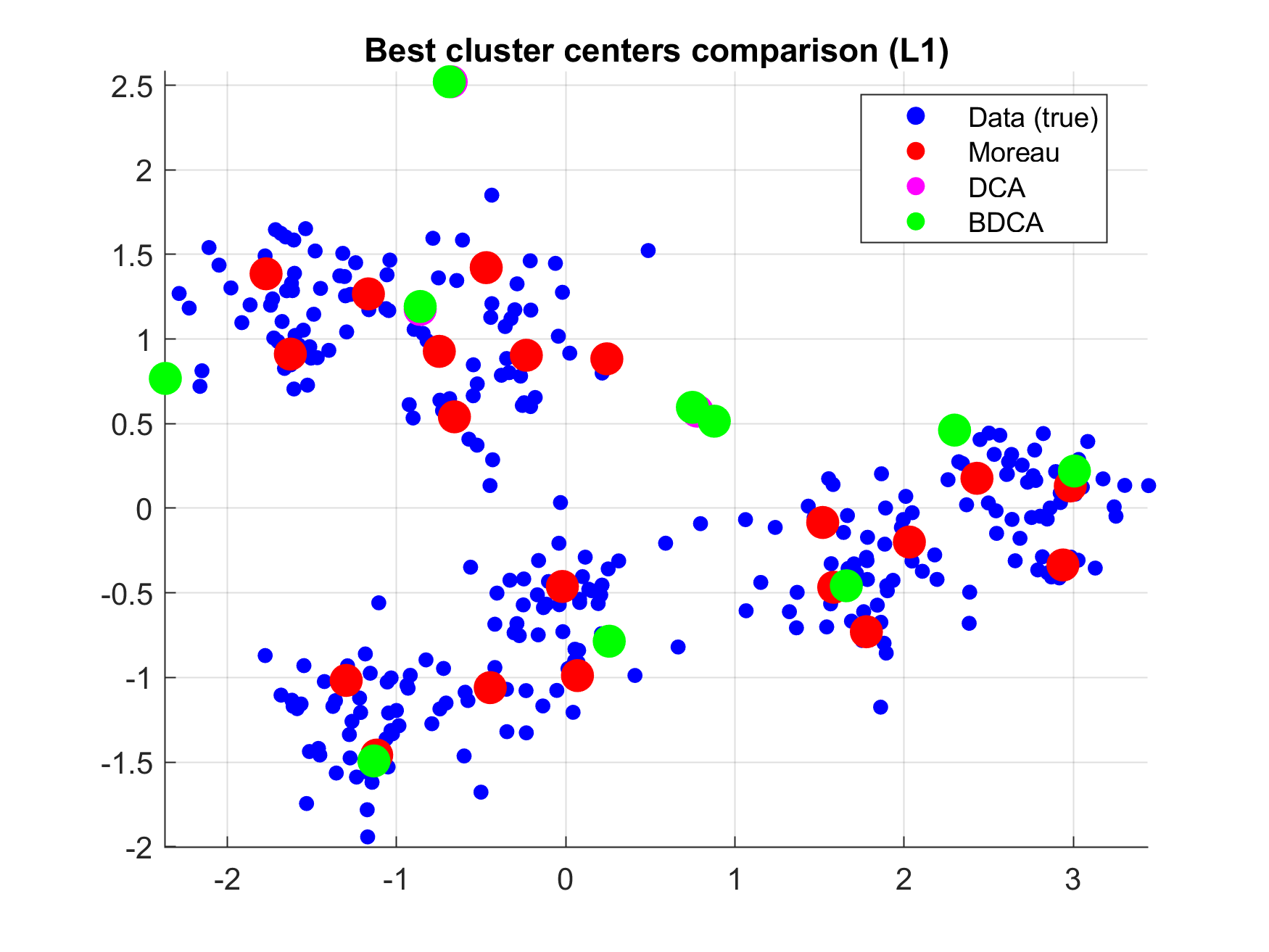}
\includegraphics[width=0.32\textwidth]{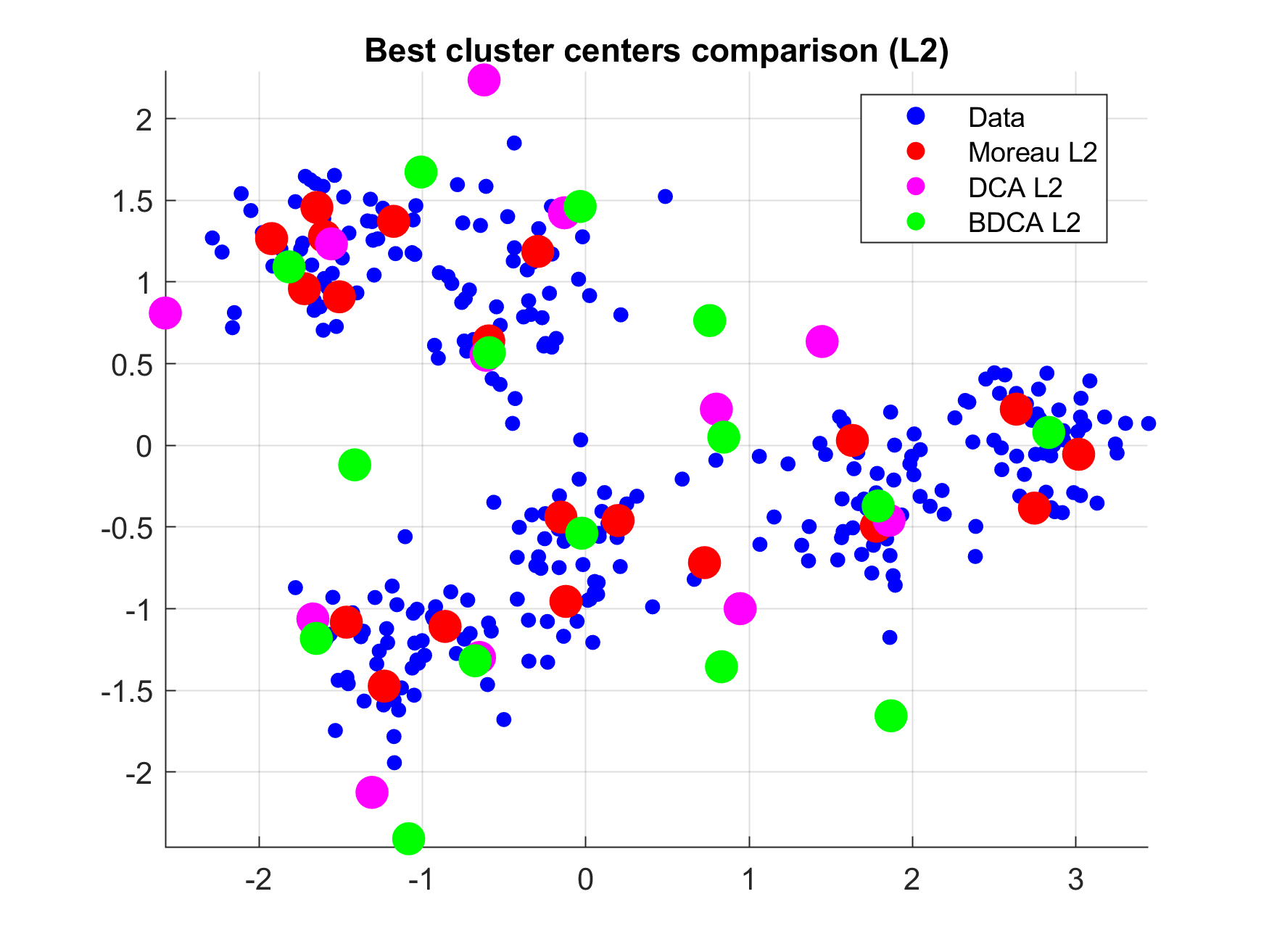}
\includegraphics[width=0.32\textwidth]{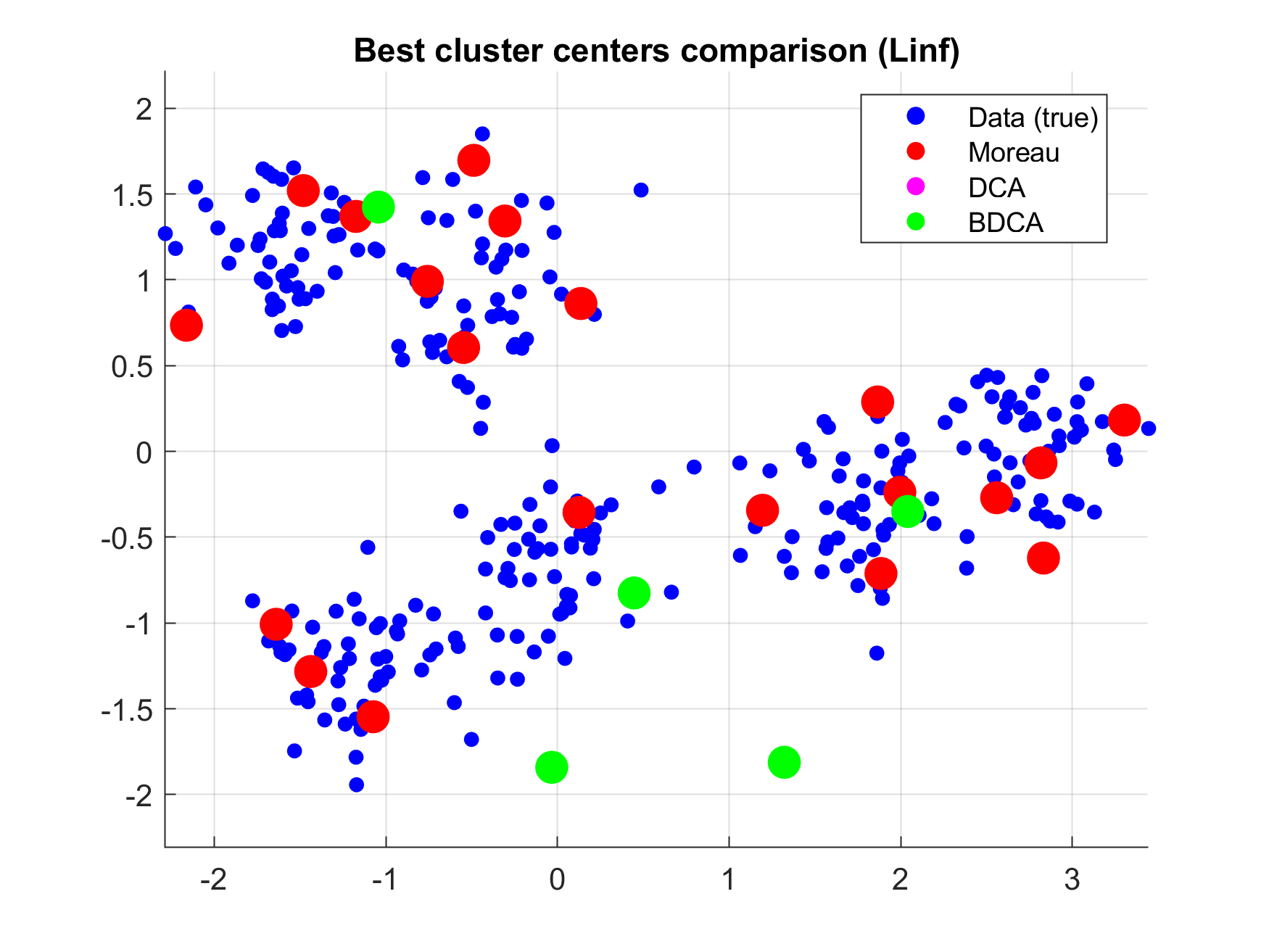}
\caption{Visualization of the best solution over 50 runs for the synthetic dataset under the fixed-center setting with $k_{\mathrm{init}} = 20$.}
\label{fig:fixed_k_20}
\end{figure}

Based on Tables~\ref{tab:fixed_k_10} and \ref{tab:fixed_k_20}, we observe that the proposed Moreau-based method generally outperforms or remains competitive with DCA and aBDCA across different datasets and norms. Specifically, Moreau achieves the best or highly competitive clustering accuracy while also attaining lower or comparable objective values and significantly faster running times. Its advantage is particularly evident under the $\ell_1$ and $\ell_\infty$ norms, where it consistently outperforms DCA and aBDCA, especially on the Synthetic and Glass datasets. These results indicate that the Moreau smoothing strategy effectively handles the nonsmooth objective and yields a more stable and efficient optimization process.

Another important observation concerns the number of clusters returned by each method. While Moreau tends to preserve a reasonable number of clusters close to the initialization, DCA and aBDCA frequently collapse to solutions with very few clusters (e.g., $k=2$ in several $\ell_\infty$ cases). This collapse leads to a poor representation of the data structure and explains the degradation in clustering accuracy. These observations are further supported by the visualizations in Figures~\ref{fig:fixed_k_10} and~\ref{fig:fixed_k_20}. The clusters obtained by Moreau are compact and well aligned with the underlying data distribution, whereas DCA and aBDCA often produce misplaced centers, with some lying far from the true cluster regions.

Overall, the results indicate that the proposed Moreau-based approach provides a more robust and reliable framework for clustering, particularly for nonsmooth formulations and challenging norms such as $\ell_\infty$.

\subsection{Adaptive number of centers}
\label{adaptive-synthetic-k}

In this setting, the number of centers is allowed to vary dynamically during the optimization process. The proposed adaptive Moreau-based algorithm extends the fixed-$k$ variant by incorporating two additional mechanisms: the removal of empty clusters and the merging of nearby centers based on a penalized objective. As a result, the method jointly optimizes both the locations of the centers and the number of active clusters. For completeness, the adaptive-$k$ scheme is summarized in Algorithm~\ref{alg:adaptive-moreau-gmw-2}.

\begin{algorithm}[H]
\caption{Moreau-Envelope Algorithm for GMWP with Adaptive Number of Centers}
\label{alg:adaptive-moreau-gmw-2}

\textbf{Input:} 
Demand points $A := \{a^1,\ldots,a^m\} \subset \mathbb{R}^n$, 
initial number of clusters $k_0$, 
smoothing parameter $\mu>0$, 
stepsize $\alpha>0$, 
tolerance $\varepsilon>0$, 
merge period $T_{\mathrm{merge}} \in \mathbb{N}$, 
merge quantile $q \in (0,1)$, 
penalty parameter $\lambda_k>0$.

\textbf{Initialization:} 
Choose $x^{(0)} \in \mathbb{R}^{n k_0}$, 
$t := 0$.

\begin{algorithmic}[1]

\Statex \textit{Let $k_t$ denote the number of clusters at iteration $t$.}

\Repeat

    \State Compute the gradient of the Moreau envelope:
    \[
    \nabla f_{F,k_t}^{\mu}(x^{(t)}) = 
    \frac{1}{\mu} \sum_{i=1}^m \Bigl( x^{(t)} - \mathrm{prox}_{\mu \varphi_i}(x^{(t)}) \Bigr),
    \]
    where $\mathrm{prox}_{\mu \varphi_i}(x^{(t)})$ modifies only the block 
    corresponding to the center nearest to $a^i$.

    \State Gradient update:
    \[
    x^{(t+\frac12)} = x^{(t)} - \alpha \nabla f_{F,k_t}^{\mu}(x^{(t)}).
    \]
    \State Reassign each demand point to its nearest center.

    \State Delete empty clusters. Let $J_t$ be the set of nonempty clusters and define
    \[
    x^{(t+1)} := \bigl( x_j^{(t+\frac12)} \bigr)_{j \in J_t}, 
    \qquad k_{t+1} := |J_t|.
    \]
    \State \textbf{Merge step (every $T_{\mathrm{merge}}$ iterations):}
    \If{$t \equiv 0 \ (\mathrm{mod}\ T_{\mathrm{merge}})$ and $k_{t+1} > 1$}
        \State Compute pairwise distances between centers and define a threshold
        \[
        \delta_t := \mathrm{quantile}_q\bigl(\{\|x_i^{(t+1)} - x_j^{(t+1)}\|\}_{i<j}\bigr)
        \]
        using the quantile $q$.

        \Repeat
            \State Find a pair $(i,j)$ such that $\|x_i^{(t+1)} - x_j^{(t+1)}\| \le \delta_t$.
            \State Let $C_i$ and $C_j$ be the sets of demand points assigned to clusters $i$ and $j$.
            \State Compute the merged center
            \[
            \bar{x}_{ij} := \arg\min_{x \in \mathbb{R}^n} \sum_{a^p \in C_i \cup C_j} \rho_F(x - a^p).
            \]
            \State Construct $\tilde{x}$ by replacing $(x_i^{(t+1)},x_j^{(t+1)})$ with $\bar{x}_{ij}$.
            \If{
            $
            f_F(\tilde{x}) + \lambda_k (k_{t+1}-1) \le f_F(x^{(t+1)}) + \lambda_k k_{t+1}
            $
            }
                \State Accept the merge: update $x^{(t+1)} \leftarrow \tilde{x}$ and $k_{t+1} \leftarrow k_{t+1}-1$.
                \State Reassign all demand points to the updated centers.
            \Else
                \State Reject the merge.
            \EndIf
        \Until{no further merge is accepted}

    \EndIf

    \State $t \leftarrow t + 1$.

\Until{
\(
\|\nabla f_{F,k_t}^{\mu}(x^{(t)})\| \le \varepsilon (\|x^{(t)}\| + 1)
\)
}

\textbf{Output:} Final number of clusters $k_t$ and cluster centers $x^{(t)}$.

\end{algorithmic}
\end{algorithm}
\begin{Remark}
\label{rem:merge_rule}
In the adaptive Moreau-envelop algorithm, the merging procedure is governed by
three key ingredients: the merge period $T_{\mathrm{merge}}$, the merge
threshold (defined via quantile $q$), and an objective-based acceptance test.

\begin{itemize}
    \item \textbf{Merge period $T_{\mathrm{merge}}$:} 
    The merge step is performed every $T_{\mathrm{merge}}$ iterations, not at every gradient update. 
    This allows cluster centers to stabilize locally before attempting any merge.

    \item \textbf{Merge threshold (quantile $q$):} 
    At each merge stage, only pairs of cluster centers that are geometrically close are considered for merging. 
    The threshold is set using a lower quantile of all pairwise center distances, ensuring scale invariance and preventing premature merging.

    \item \textbf{Acceptance test:} 
    A merge is accepted only if it reduces or keeps unchanged the penalized objective 
    $f_F(x) + \lambda_k k$, balancing data-fitting and model complexity.

    \item \textbf{Choice of merged center:} 
    Once a merge is accepted, the new center is selected according to the underlying norm:
    \begin{itemize}
        \item $\ell_1$: componentwise median of the points in the merged clusters,
        \item $\ell_2$: arithmetic mean or geometric median of the points,
        \item $\ell_\infty$: componentwise midrange (midpoint of min and max in each coordinate).
    \end{itemize}
    This ensures the merged center is representative under the corresponding norm.
\end{itemize}
\end{Remark}

We evaluate Algorithm~\ref{alg:adaptive-moreau-gmw-2} on the same datasets as in the fixed-$k$ setting. The initial number of centers is set to $k_{\mathrm{init}} = 10$ and $k_{\mathrm{init}} = 20$. For each setting, we perform 50 runs with randomly initialized centers.

For each method, the best solution over all runs is selected based on the objective value, and the corresponding performance metrics are reported.

The merge period is set to $T_{\mathrm{merge}} = 1$ and the merge threshold to $q = 10\%$. The cluster penalty parameter $\lambda_k$ is chosen empirically according to the complexity of each dataset. Specifically, $\lambda_k = 10$ for the synthetic dataset with 2 features, $\lambda_k = 25$ for datasets of moderate dimensionality (Iris with 4 features and Glass with 9 features), and $\lambda_k = 50$ for the Wine dataset with 13 features.

Numerical results are reported in Tables~\ref{tab:adaptive_k_10} and \ref{tab:adaptive_k_20} for $k_{\mathrm{init}} = 10$ and $k_{\mathrm{init}} = 20$, respectively. Corresponding visualizations of the best solutions on the synthetic dataset are presented in Figures~\ref{fig:adaptive_k_10} and \ref{fig:adaptive_k_20}.

\begin{table}[H]
\centering
\setlength{\tabcolsep}{3pt} 
\renewcommand{\arraystretch}{1}
\begin{tabular}{c|l|cccc|cccc|cccc}
\hline
& & \multicolumn{12}{c}{\textbf{Norm}} \\
\textbf{Dataset} & \textbf{Method}
& \multicolumn{4}{c}{$\ell_1$}
& \multicolumn{4}{c}{$\ell_2$}
& \multicolumn{4}{c}{$\ell_\infty$} \\
\cline{3-14}
& & ACC & Obj & Time & $k$ 
& ACC & Obj & Time & $k$ 
& ACC & Obj & Time & $k$ \\
\hline

\multirow{3}{*}{Synthetic}
& Moreau 
& \textbf{0.9867} & \textbf{139} & 2 & \textbf{6}
& 0.9833 & \textbf{111} & 2 & \textbf{6}
& \textbf{0.9700} & \textbf{101} & \textbf{7} & \textbf{6} \\

& DCA    
& 0.9767 & 188 & 4 & 7
& 0.9867 & 111 & 4 & 7
& 0.5000 & 206 & 53 & 3 \\

& aBDCA  
& 0.9767 & 184 & 1 & 7
& 0.9833 & 111 & 2 & 7
& 0.5000 & 206 & 60 & 3 \\

\hline
\multirow{3}{*}{Iris}
& Moreau 
& 0.8600 & \textbf{204} & 2 & \textbf{3}
& \textbf{0.8600} & \textbf{123.0} & 2 & 4
& \textbf{0.6600} & \textbf{124} & \textbf{0.4} & 2  \\

& DCA    
& 0.8667 & 217 & 7 & 4
& 0.8600 & 123.4 & 2 & 4
& 0.6600 & 127 & 29 & 2  \\

& aBDCA  
& 0.8200 & 217 & 2 & 4
& 0.8600 & 123.4 & 3 & 4
& 0.6600 & 127 & 102 & 2  \\

\hline
\multirow{3}{*}{Wine}
& Moreau 
& 0.9157 & \textbf{1193} & 2 & 5
& 0.9719 & 433 & \textbf{1} & 5
& 0.3989 & 336 & 0.3 & 1 \\

& DCA    
& 0.9607 & 1294 & 2 & 5
& 0.9719 & \textbf{426} & 7 & 6
& 0.6292 & \textbf{291} & 26 & 2 \\

& aBDCA  
& 0.9382 & 1302 & \textbf{1} & 6
& 0.9719 & \textbf{426} & 10 & 6
& 0.6292 & \textbf{291} & 37 & 2 \\
\hline
\multirow{3}{*}{Glass}
& Moreau 
& \textbf{0.5607} & \textbf{647} & 14 & 8
& 0.5701 & 310 & 15 & 8 
& \textbf{0.5327} & \textbf{231} & \textbf{4} & 5  \\

& DCA    
& 0.4813 & 1044 & 7 & 5  
& 0.5701 & \textbf{300} & 11 & 8
& 0.4252 & 307 & 45 & 2  \\

& aBDCA  
& 0.4813 & 1042 & 2 & 5  
& 0.5654 & 304 & 11 & 8 
& 0.4252 & 307 & 56 & 2  \\
\hline
\end{tabular}
\caption{Comparison of methods for the adaptive-center setting with $k_{\mathrm{init}} = 10$.}
\label{tab:adaptive_k_10}
\end{table}

\begin{table}[H]
\centering
\setlength{\tabcolsep}{3pt} 
\renewcommand{\arraystretch}{1}
\begin{tabular}{c|l|cccc|cccc|cccc}
\hline
& & \multicolumn{12}{c}{\textbf{Norm}} \\
\textbf{Dataset} & \textbf{Method}
& \multicolumn{4}{c}{$\ell_1$}
& \multicolumn{4}{c}{$\ell_2$}
& \multicolumn{4}{c}{$\ell_\infty$} \\
\cline{3-14}
& & ACC & Obj & Time & $k$ 
& ACC & Obj & Time & $k$ 
& ACC & Obj & Time & $k$ \\
\hline

\multirow{3}{*}{Synthetic}
& Moreau 
& \textbf{0.9867} & \textbf{139} & \textbf{1} & \textbf{6}
& 0.9833 & 111 & 3 & \textbf{6}
& \textbf{0.9700} & \textbf{101} & \textbf{4} & \textbf{6}  \\

& DCA    
& 0.8267 & 193 & 8 & 6
& 0.9833 & 111 & 14 & 8
& 0.5000 & 206 & 45 & 3  \\

& aBDCA  
& 0.9733 & 185 & 1 & 7
& 0.9867 & 111 & 1 & 7
& 0.5000 & 206 & 54 & 3  \\
\hline

\multirow{3}{*}{Iris}
& Moreau 
& 0.8600 & \textbf{204} & 2 & \textbf{3}
& \textbf{0.8600} & \textbf{123.0} & \textbf{3} & 4
& 0.6600 & \textbf{124} & \textbf{0.3} & 2 \\

& DCA    
& 0.8667 & 217 & 7 & 4
& 0.8600 & 123.4 & 9 & 4
& 0.6600 & 127 & 12 & 2 \\

& aBDCA  
& 0.8200 & 217 & 2 & 4
& 0.8467 & 123.5 & 7 & 4
& 0.6600 & 127 & 16 & 2 \\
\hline

\multirow{3}{*}{Wine}
& Moreau 
& 0.9607 & \textbf{1211} & 2 & 5
& \textbf{0.9775} & 431 & \textbf{1} & 5
& 0.399 & 336 & 0.3 & 1 \\

& DCA    
& 0.9551 & 1294 & 4 & 6
& 0.9719 & \textbf{424} & 15 & 6
& 0.629 & 291 & 41 & 2 \\

& aBDCA  
& 0.9719 & 1297 & 1 & 6
& 0.9719 & \textbf{424} & 24 & 6
& 0.629 & 291 & 58 & 2 \\
\hline

\multirow{3}{*}{Glass}
& Moreau 
& \textbf{0.5701} & \textbf{646} & 23 & 7  
& 0.5748 & 302 & \textbf{17} & 8
& \textbf{0.5000} & \textbf{230} & \textbf{6} & 7  \\

& DCA    
& 0.4860 & 1028 & 17 & 5  
& 0.5794 & \textbf{293} & 27 & 9
& 0.4252 & 307 & 79 & 2  \\

& aBDCA  
& 0.4907 & 1020 & 4 & 5  
& 0.5794 & \textbf{293} & 20 & 9
& 0.4252 & 307 & 97 & 2  \\
\hline

\end{tabular}
\caption{Comparison of methods for the adaptive-center setting with $k_{\mathrm{init}} = 20$.}
\label{tab:adaptive_k_20}
\end{table}

\begin{figure}[H]
\centering
\includegraphics[width=0.32\textwidth]{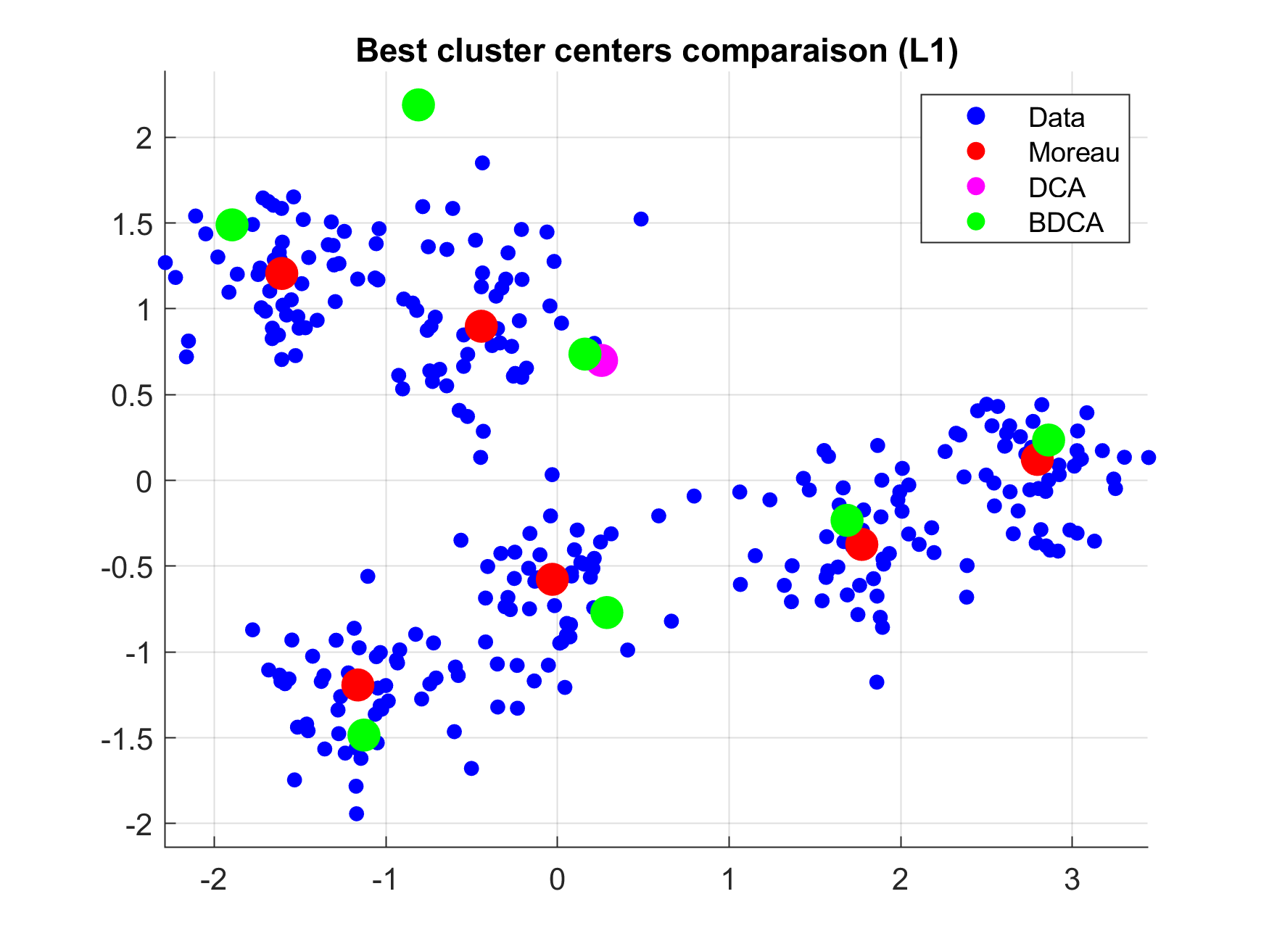}
\includegraphics[width=0.32\textwidth]{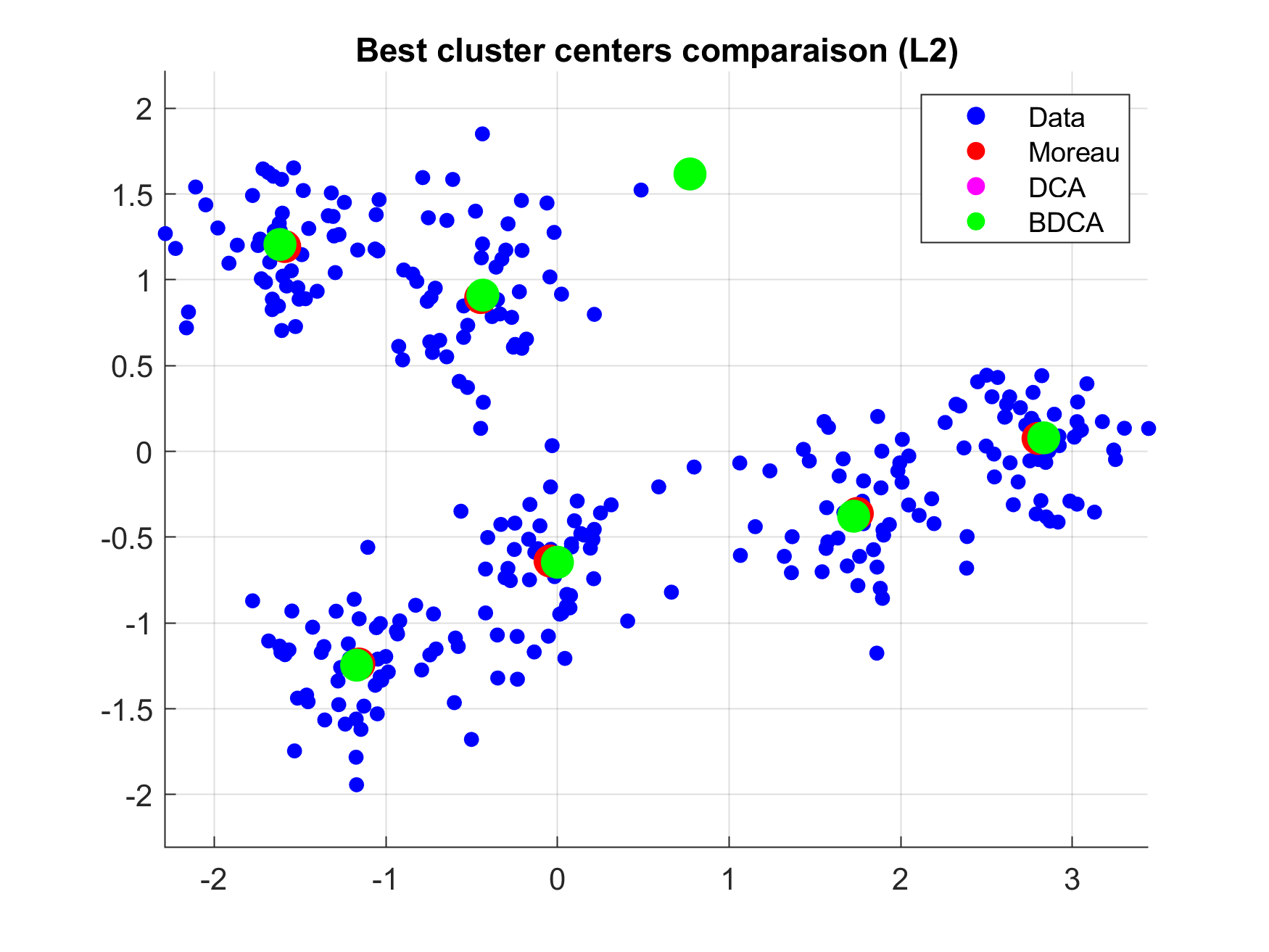}
\includegraphics[width=0.32\textwidth]{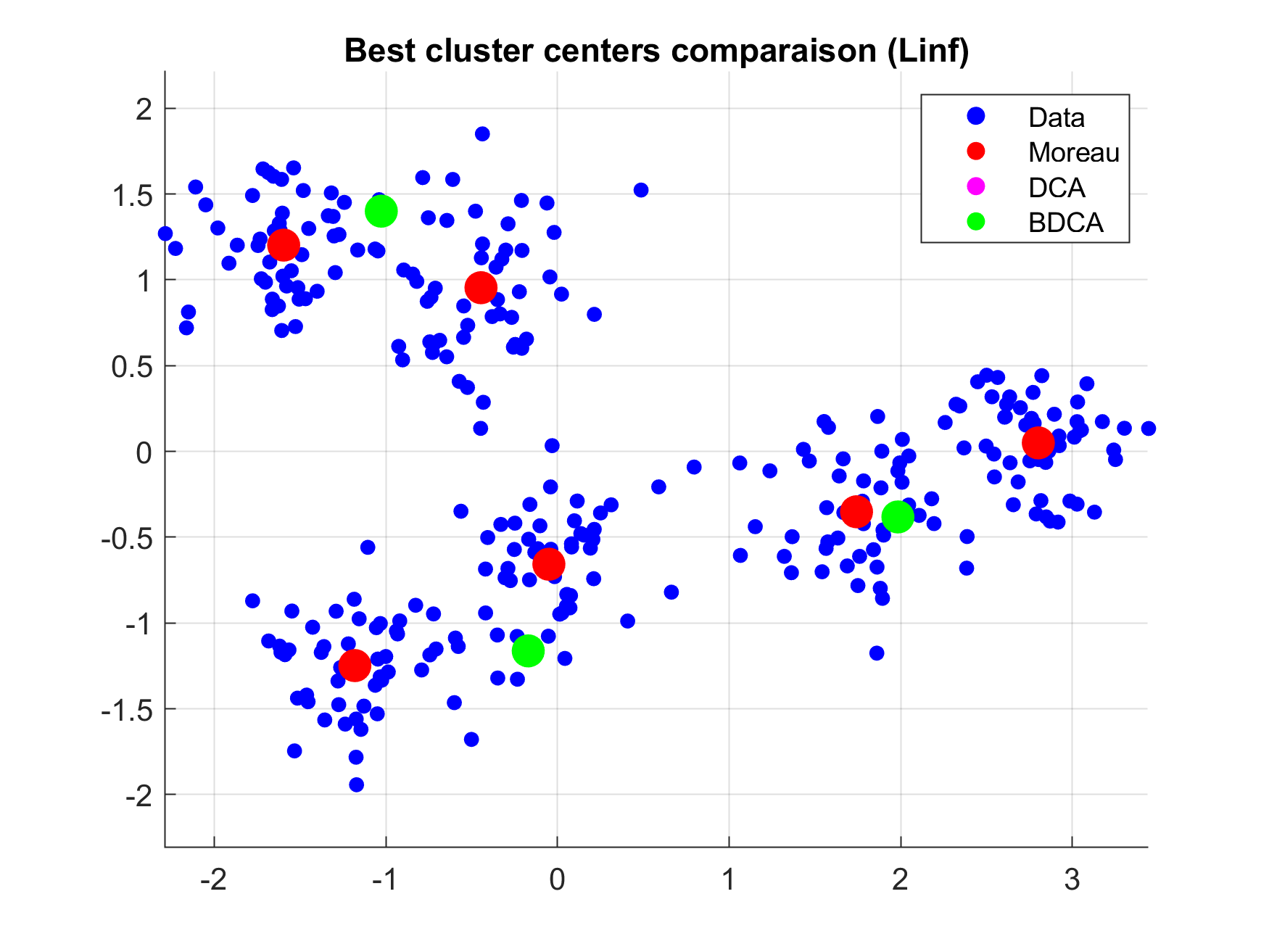}
\caption{Visualization of the best solution for the synthetic dataset under the adaptived-center setting with $k_{\mathrm{init}} = 10$.}
\label{fig:adaptive_k_10}
\end{figure}

\begin{figure}[H]
\centering
\includegraphics[width=0.32\textwidth]{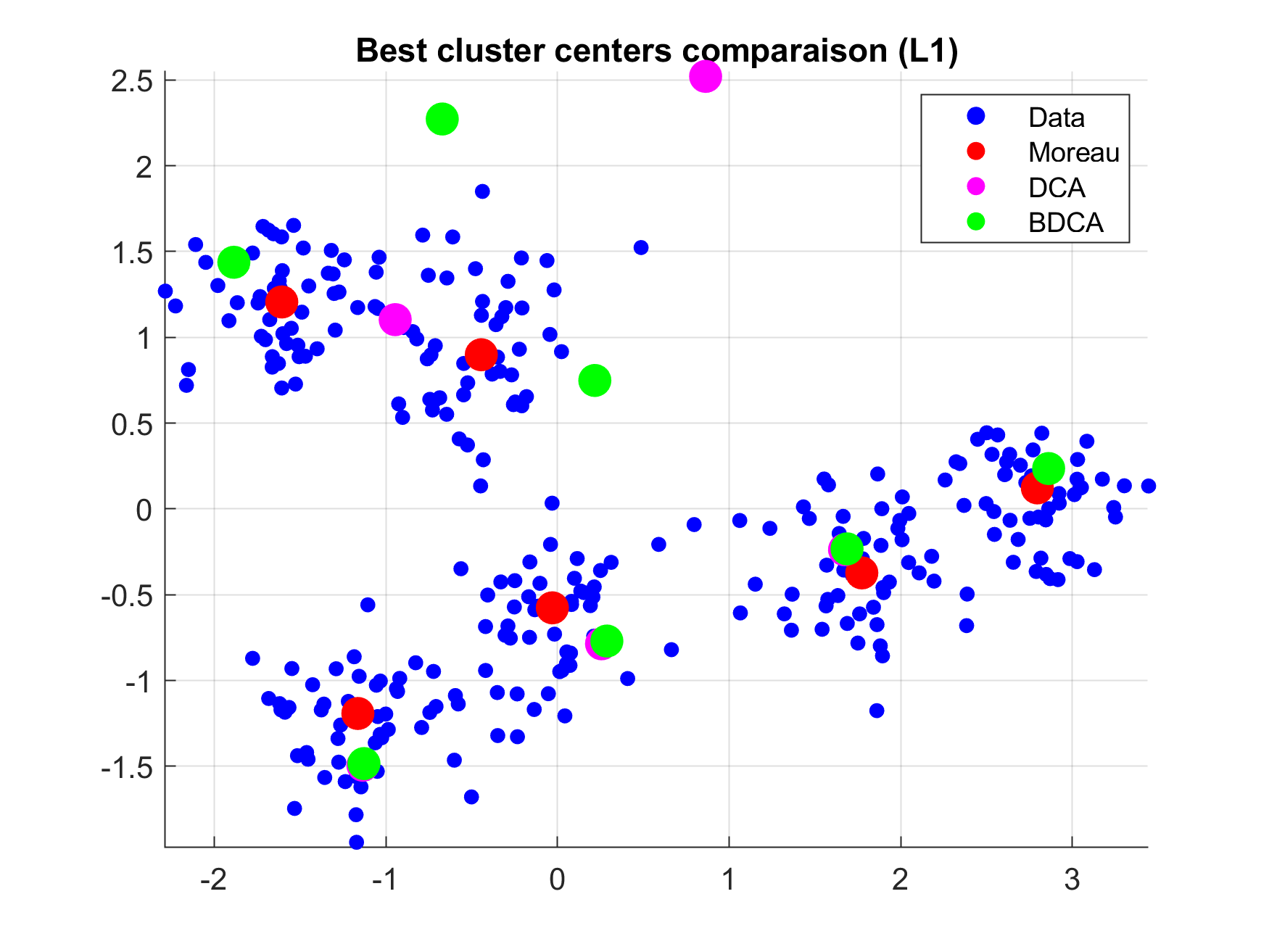}
\includegraphics[width=0.32\textwidth]{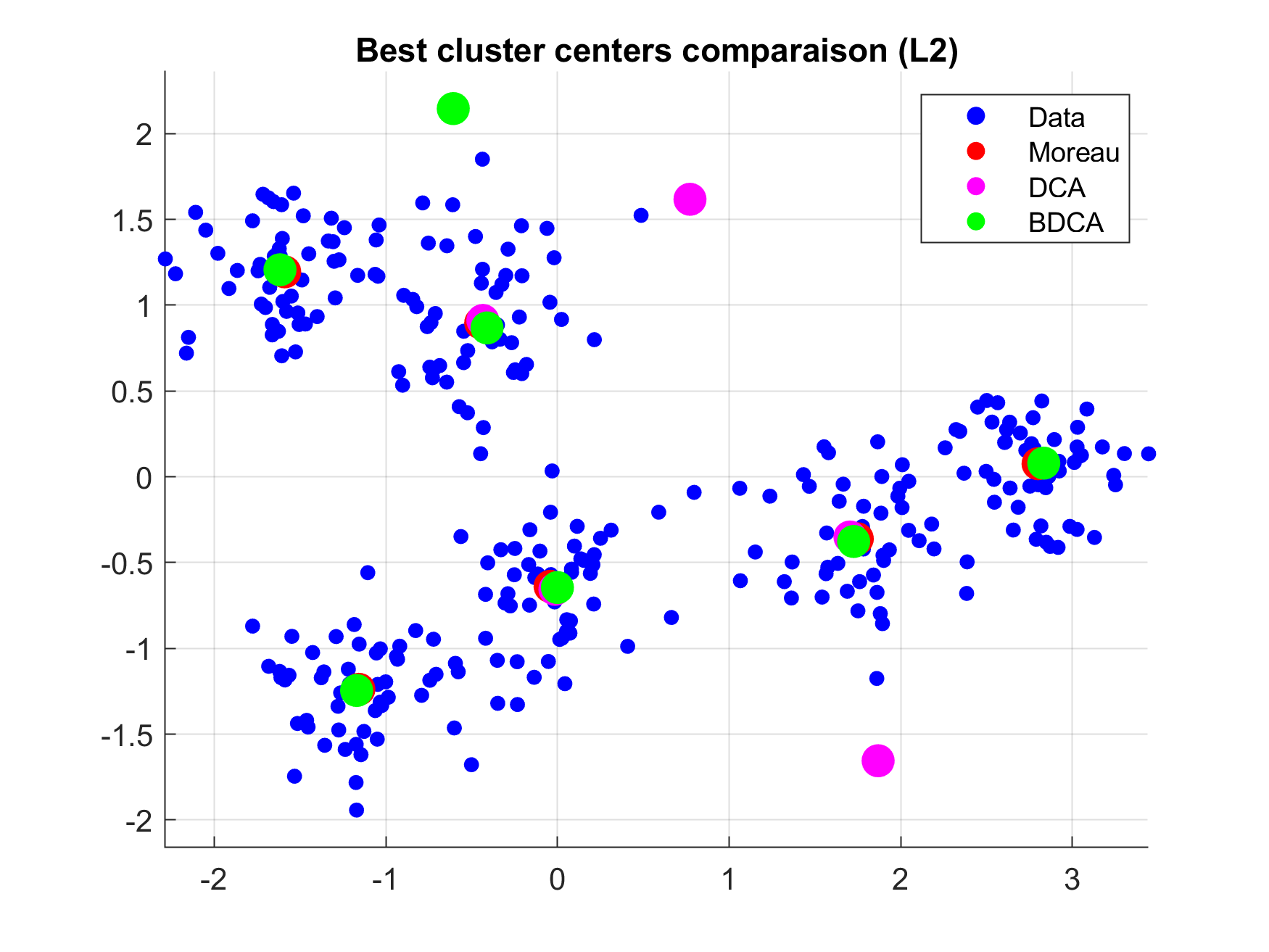}
\includegraphics[width=0.32\textwidth]{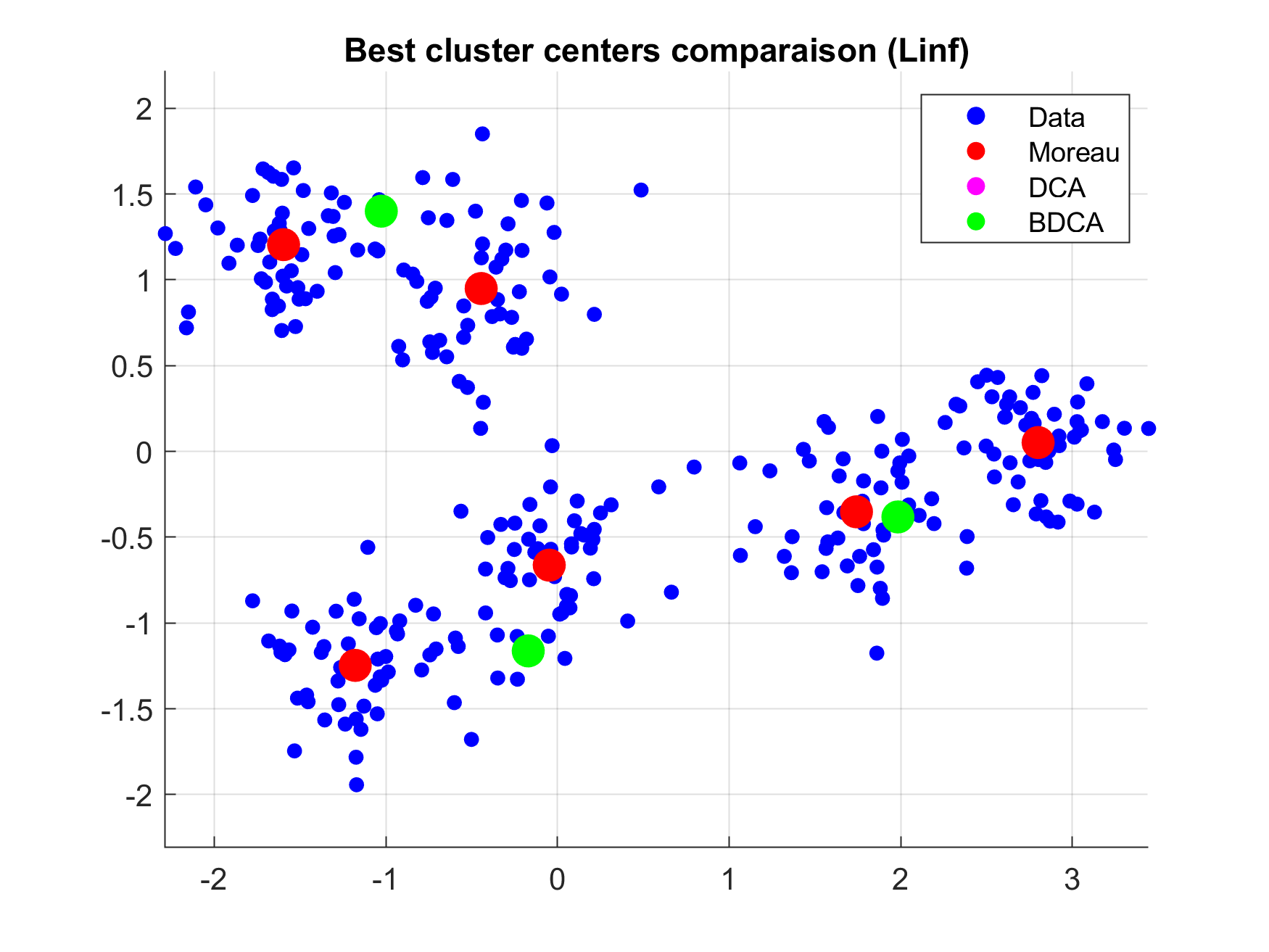}
\caption{Visualization of the best solution for the synthetic dataset under the adaptive-center setting with $k_{\mathrm{init}} = 20$.}
\label{fig:adaptive_k_20}
\end{figure}

Based on Tables~\ref{tab:adaptive_k_10} and \ref{tab:adaptive_k_20}, the proposed Moreau-based method generally outperforms DCA and aBDCA across various datasets and norms. In particular, it often achieves lower objective values while maintaining comparable or higher clustering accuracy. The advantage is especially pronounced under the $\ell_1$ and $\ell_\infty$ norms, where the Moreau-based approach consistently surpasses the other methods on datasets such as Synthetic and Glass.

For the Synthetic dataset, which has a well-separated cluster structure, the Moreau method is able to recover the true number of clusters and achieves near-perfect accuracy. This indicates that the proposed smoothing strategy preserves the intrinsic geometry of the data while effectively handling nonsmoothness. In contrast, DCA and aBDCA tend to merge clusters under the $\ell_\infty$ norm, resulting in a significantly reduced number of clusters (e.g., $k=3$) and a substantial drop in accuracy.

Another notable aspect is the number of clusters returned by each method. Moreau typically produces a reasonable and stable estimate of $k$, whereas DCA and aBDCA sometimes collapse to very small values, particularly under the $\ell_\infty$ norm. This behavior leads to degraded clustering quality and suggests a lack of robustness when dealing with nonsmooth formulations.

These observations are further supported by the visual results in Figures~\ref{fig:adaptive_k_10} and~\ref{fig:adaptive_k_20}. The clusters obtained by the Moreau-based method are more compact and better aligned with the underlying data distribution, while DCA and aBDCA occasionally produce poorly located centers or merged clusters.

Overall, the results demonstrate that the Moreau-based approach is more robust and reliable, especially for nonsmooth clustering problems and challenging norms such as $\ell_\infty$.

\section{Conclusion}\label{sec:Conclusion}

In this paper, we proposed an efficient clustering framework based on the
Moreau envelope smoothing technique for nonsmooth and nonconvex optimization
problems with an unknown number of clusters.
By smoothing the nonsmooth components of the objective function,
the original clustering model is transformed into a structured problem
that can be effectively solved using first-order methods.

The resulting algorithm starts from an initial overestimation of the number
of clusters and automatically reduces it during the optimization process
by eliminating empty clusters and merging redundant ones.
This adaptive mechanism allows the method to balance clustering accuracy
and model complexity without requiring the number of clusters to be specified
in advance.

Numerical experiments on both synthetic and real datasets demonstrate that
the proposed approach is competitive with existing methods such as DCA and aBDCA.
In particular, the adaptive Moreau-based algorithm achieves comparable or
better objective values while significantly reducing the number of clusters,
with reasonable computational cost.
These results highlight the effectiveness and flexibility of the proposed
method for large-scale clustering problems.

\section*{Declarations}

{\bf Competing Interests} There are no conflicts of interest or competing interests related to this manuscript.

\end{document}